\newtheorem{theorem}{Theorem}[section]
\newtheorem{lemma}[theorem]{Lemma}
\newtheorem{proposition}[theorem]{Proposition}
\newtheorem{remark}[theorem]{Remark}
\newtheorem{conjecture}[theorem]{Conjecture}
\newtheorem{question}[theorem]{Question}
\newcommand{\pr}[1]{\operatorname{\mathbf{P}}\left(#1\right)}
\newcommand{\prcond}[2]{\operatorname{\mathbf{P}}\left(#1\;\middle\vert\;#2\right)}
\newcommand{\E}[1]{\operatorname{\mathbf{E}}\left[#1\right]}
\newcommand{\vol}[1]{\operatorname{vol}\left(#1\right)}
\newcommand{\critical}{\mathrm{c}}
\newcommand{\IGNORE}[1]{}
\begin{document}

\title{Perturbing the hexagonal circle packing: a percolation perspective}

\author{Itai Benjamini\thanks{Weizmann Institute of Science, Rehovot, Israel\ \ Email: \hbox{itai.benjamini@weizmann.ac.il}}
\and Alexandre Stauffer\thanks{Computer Science Division, University
        of California, Berkeley CA, USA\ \ Email:
        \hbox{stauffer@cs.berkeley.edu}.
        Supported by a Fulbright/CAPES scholarship and NSF grants CCF-0635153 and
        DMS-0528488.}} 

\maketitle

\begin{abstract}
   We consider the hexagonal circle packing with radius $1/2$ and perturb it by letting the circles move as independent Brownian motions for time $t$.
   It is shown that, for large enough $t$,
   if $\Pi_t$ is the point process given by the center of the circles at time $t$, then, as $t\to\infty$, 
   the critical radius for circles centered at $\Pi_t$ to contain an infinite 
   component converges to that of continuum percolation 
   (which was shown---based on a Monte Carlo estimate---by Balister, Bollob\'as and Walters to be strictly bigger than $1/2$). 
   On the other hand, for small enough $t$, we show (using a Monte Carlo estimate for a fixed but high dimensional integral) 
   that the union of the circles contains an infinite connected component. 
   We discuss some extensions and open problems.
\end{abstract}


\section{Introduction}
Let $\mathcal{T}$ be the triangular lattice with edge length $1$ and let $\Pi_0$ be the set of vertices of $\mathcal{T}$. 
We see $\Pi_0$ as a point process and, to avoid ambiguity, we use the term \emph{node} to refer to the points of $\Pi_0$. 
Now, for each node $u\in \Pi_0$,
we add a ball of radius $1/2$ centered at $u$, and set $R(\Pi_0)$ to be the region of $\mathbb{R}^2$ obtained by the 
union of these balls; more formally, 
$$
   R(\Pi_0) = \bigcup_{x\in\Pi_0} B(x,1/2),
$$
where $B(y,r)$ denotes the closed ball of radius $r$ centered at $y$.
In this way, $R(\Pi_0)$ is the so-called hexagonal circle packing of $\mathbb{R}^2$;  
refer to~\cite{Conway} for more information on packings.
Clearly, the region $R(\Pi_0)$ is a \emph{connected} subset of $\mathbb{R}^2$. 

Our goal is to analyze how this set evolves as we let the nodes of $\Pi_0$ move on $\mathbb{R}^2$ according to 
independent Brownian motions. For any $t>0$, let $\Pi_t$ be the point process obtained after the nodes have moved for time $t$.
More formally, for each node $u\in\Pi_0$,
let $(\zeta_u(t))_t$ be an independent Brownian motion on $\mathbb{R}^2$ starting at the origin, and set
$$
   \Pi_t = \bigcup_{u\in\Pi_0} (u + \zeta_u(t)).
$$
A natural question is whether there exists a phase transition on $t$ such that $R(\Pi_t)$ has an infinite component for small $t$ but has only finite components for large $t$. 

Intuitively, for sufficiently large time, one expects that $\Pi_t$ will look like a Poisson point process with intensity $2/\sqrt{3}$, which is the density of nodes in the triangular lattice.
Then, for sufficiently large $t$, $R(\Pi_t)$ will contain an infinite component almost surely only if 
$R(\Phi)$ contains an infinite component where $\Phi$ is a Poisson point process with intensity $\lambda=2/\sqrt{3}$.
In the literature, $R(\Phi)$ is referred to as the Boolean model. For this model, it is known that 
there exists a value $\lambda_\critical$ so that, 
if $\lambda<\lambda_\critical$, 
then all connected components of $R(\Phi)$ are finite almost surely~\cite{MR}.
On the other hand, if $\lambda>\lambda_\critical$, then $R(\Phi)$ contains
an infinite connected component. The value of $\lambda_\critical$ is currently unknown and depends on the radius of the balls in the
definition of the region $R$. When the balls have radius $1/2$, it is known that $\lambda_\critical$ satisfies
$0.52 \leq \lambda_\critical \leq 3.38$~\cite[Chapter~8]{BR}, but these bounds do not answer whether $2/\sqrt{3}$ is smaller or larger than $\lambda_\critical$. 
However, using a Monte Carlo analysis, Balister, Bollob\'as and Walters~\cite{BBW05} showed that, with $99.99\%$ confidence, $\lambda_\critical$ lies 
between $1.434$ and $1.438$, which are both larger than $2/\sqrt{3}$.
We then have the following theorem, whose proof we give in Section~\ref{sec:largetime}.

\begin{theorem}\label{thm:percolation}
   If $\lambda_\critical > 2/\sqrt{3}$, where $\lambda_\critical$ is the critical intensity for percolation of the Boolean model with balls of radius $1/2$, 
   then there exists a positive $t_0$ so that, for all $t>t_0$, $R(\Pi_t)$ contains no infinite component almost surely.
\end{theorem}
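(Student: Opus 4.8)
The plan is to show that, for large $t$, the point process $\Pi_t$ is ``dominated'' by a Poisson point process of intensity slightly below $\lambda_\critical$ in a sense strong enough to rule out percolation of $R(\Pi_t)$. The naive statement ``$\Pi_t$ converges to a Poisson process of intensity $2/\sqrt3$'' is true locally, but convergence in distribution is far too weak to transfer a percolation non-existence statement, since percolation is not a continuity point of anything and depends on the joint behavior over all of $\mathbb{R}^2$. So I would instead work at a fixed large but finite scale and use a block/renormalization argument together with a coupling.

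First I would fix a large integer $L$ and tile $\mathbb{R}^2$ into squares of side $L$. For each square $Q$, consider the event $A_Q$ that $R(\Pi_t)$ restricted to a neighborhood of $Q$ (say the $3\times 3$ block of squares centered at $Q$) contains a connected component crossing $Q$. If $R(\Pi_t)$ had an infinite component, then infinitely many squares along some path would have their crossing events occur, so it suffices to show that for suitable $L$ and all large $t$ the events $\{A_Q\}$ are stochastically dominated by a subcritical site (or more precisely $k$-dependent) percolation on the grid of squares. The key probabilistic input is that $\pr{A_Q}$ can be made small: I would compare $\Pi_t$ inside the relevant bounded region to a Poisson process $\Phi$ of intensity $\lambda_1$ with $2/\sqrt3 < \lambda_1 < \lambda_\critical$. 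Since $\lambda_1 < \lambda_\critical$, the Boolean model $R(\Phi)$ is subcritical, so by the standard exponential decay of the radius of the component of the origin in subcritical continuum percolation (see~\cite{MR}), the probability that $R(\Phi)$ has a component crossing a box of side $L$ tends to $0$ as $L\to\infty$; hence we may fix $L$ so that this probability, call it $\varepsilon_0$, is below the threshold needed for the block argument (which, because the crossing events depend only on $\Pi_t$ in a bounded neighborhood, is a finite-range dependent percolation and is subcritical once $\varepsilon_0$ is small enough, e.g.\ by the Liggett--Schonmann--Stacey comparison).

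The main obstacle — and the technical heart of the argument — is the coupling between $\Pi_t$ restricted to a bounded window $W$ and a Poisson process of intensity $\lambda_1$ on $W$, for all large $t$. Here is the idea. Nodes of $\Pi_0$ lie on the triangular lattice; after running Brownian motion for time $t$, a node originally at $u$ is at $u+\zeta_u(t)$, where $\zeta_u(t)$ is Gaussian with variance of order $t$ in each coordinate. The number of nodes landing in $W$ is a sum of independent Bernoulli variables, one per lattice point $u$, with success probability $p_u = \pr{u+\zeta_u(t)\in W}$; as $t\to\infty$ these probabilities become uniformly small (each of order $1/t$) while their sum converges to $\lambda\cdot\vol{W}$, which is where the Poisson limit comes from. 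I would make this quantitative: by a Poissonization/splitting argument one can couple, for $t$ large, the restriction $\Pi_t\cap W$ with a Poisson process on $W$ whose intensity is within $o(1)$ of $2/\sqrt3$, so in particular below $\lambda_1$, on an event of probability $1-o(1)$; off that event one simply declares the corresponding block ``bad,'' which only increases $\varepsilon_0$ by $o(1)$ and so does not spoil subcriticality. Care is needed because the window $W$ must be the $3\times3$ block (of side $3L$) around every square simultaneously, and the couplings in overlapping windows must be consistent; this is handled by building a single coupling on a slightly enlarged region and noting that the block events have bounded range, so only finitely many windows interact with any given one. I expect the bookkeeping in this consistency step, rather than any deep idea, to be the part requiring the most care; a clean way to organize it is via the coupling referenced in \refcoupling{} of the paper.

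Assembling the pieces: choose $\lambda_1\in(2/\sqrt3,\lambda_\critical)$, choose $L$ so that the subcritical Boolean model $R(\Phi_{\lambda_1})$ crosses an $L$-box with probability at most some small $\delta$, choose $t_0$ so that for $t>t_0$ the coupling above succeeds with the block failure probability at most $2\delta$, and verify $2\delta$ is below the threshold making the finite-range dependent site percolation on the grid of $L$-squares subcritical. Then for $t>t_0$ there is no infinite cluster of ``bad'' squares, hence no infinite component of $R(\Pi_t)$, almost surely, which is the claim. $\qed$
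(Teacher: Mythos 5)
There is a genuine gap, and it sits exactly where you wave your hands: the claim that the block events form a \emph{finite-range dependent} field. Your crossing event $A_Q$ is indeed measurable with respect to $\Pi_t\cap W_Q$ for a window $W_Q$ of fixed side $3L$, and distant windows are disjoint as subsets of $\mathbb{R}^2$; but the restrictions $\Pi_t\cap W_Q$ and $\Pi_t\cap W_{Q'}$ are \emph{not} independent, because they are populated by the same family of Brownian particles, each of which has displacement of order $\sqrt{t}\gg L$ and hence positive (indeed comparable) probability of landing in either window. So every initial node interacts with every window, the field $(A_Q)_Q$ has unbounded dependency range, and the Liggett--Schonmann--Stacey comparison does not apply as you invoke it. Nor can you fix this by ``building a single coupling on a slightly enlarged region'': Lemma~\ref{lem:notentirer2} of the paper shows that $\Pi_t$ is \emph{not} stochastically dominated by any Poisson process globally (empty-space probabilities decay too fast), so any simultaneous coupling must fail somewhere, and you then need quantitative control of \emph{where} and \emph{how dependently} it fails --- which is the actual content of the proof. (The local half of your argument is fine, and in fact at a fixed scale $L$ you get the stronger total-variation coupling of $\Pi_t\cap W$ with a Poisson process via a Chen--Stein/Le Cam bound of order $\vol{W}^2/t$; the problem is purely the joint structure across blocks.)

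The paper resolves this by working at the diffusive scale: the blocks are hexagons of side $\delta\sqrt{t}$, and each node is declared \emph{well behaved} only if its trajectory can be coupled with a bounded-range surrogate that moves at most $C\delta\sqrt{t}$, i.e.\ at most a bounded number ($O(\delta^{-3})$) of hexagons. This makes the block events genuinely finite-range dependent \emph{for the well-behaved nodes}, at the price that a $\delta$-fraction of nodes is not well behaved; those are then replaced already at time $0$ by a Poisson sprinkling, so that at time $t$ they form an honest Poisson process of intensity $O(\sqrt{\delta})$, which is independent across disjoint regions by construction (Lemma~\ref{lem:problematic}). Only after this two-part decomposition does LSS apply. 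If you want to keep a fixed scale $L$, the dependence problem would have to be attacked differently --- e.g.\ the crossing events are increasing in the point configuration and the allocations of independent particles to disjoint regions are negatively associated, so one could try $\pr{\bigcap_Q A_Q}\leq\prod_Q\pr{A_Q}$ over a path of blocks with disjoint windows and run a Peierls count --- but that is a different argument from the one you wrote, and you would need to state and prove the negative-association input.
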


We now give a different statement of Theorem~\ref{thm:percolation}.
For each $t$, there exists a critical radius $r_\critical(t)$ so that, adding balls of radius $r>r_\critical(t)$
centered at the points of $\Pi_t$ gives that the union of these balls contains an infinite component almost surely. 
Then, the next theorem, whose proof we give in Section~\ref{sec:largetime}, follows from the proof of Theorem~\ref{thm:percolation} and a result of Sinclair and Stauffer~\cite{SS10}.

\begin{theorem}\label{thm:radius}
   As $t\to\infty$, we have that $r_\critical(t)$ converges to the critical radius for percolation of the Boolean model with intensity $2/\sqrt{3}$.
\end{theorem}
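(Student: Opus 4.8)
The plan is to split the claim into two matching one-sided bounds and obtain them from, respectively, the proof of Theorem~\ref{thm:percolation} and a renormalisation argument fed by the coupling of \refcoupling. Write $r^\star$ for the critical radius of the Boolean model of intensity $2/\sqrt3$ and, for a locally finite point set $A$ and $r>0$, let $R_r(A)=\bigcup_{x\in A}B(x,r)$, so that $r_\critical(t)$ is the infimum of those $r$ for which $R_r(\Pi_t)$ has an infinite component almost surely. Since percolation of $R_r(\Pi_t)$ is monotone in $r$, it suffices to prove: (i) for every $r<r^\star$ there is $t_0$ such that $R_r(\Pi_t)$ has only finite components a.s.\ for all $t>t_0$, which forces $\liminf_{t\to\infty}r_\critical(t)\ge r^\star$; and (ii) for every $r>r^\star$ there is $t_0$ such that $R_r(\Pi_t)$ has an infinite component a.s.\ for all $t>t_0$, which forces $\limsup_{t\to\infty}r_\critical(t)\le r^\star$. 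That $r_\critical(t)$ is well defined and that percolation of $R_r(\Pi_t)$ is a tail event (hence of probability $0$ or $1$) follows from the ergodicity of $\Pi_t$ under the translations of $\mathcal T$.

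Part (i) requires no new idea. The proof of Theorem~\ref{thm:percolation} uses its hypothesis only through the subcriticality of the Boolean model of intensity $2/\sqrt3$ at the radius in question, and $r<r^\star$ says precisely that this model is subcritical at radius $r$. One therefore reruns that proof with $1/2$ replaced by $r$: subcriticality yields exponential decay of the one-arm probability for $R_r(\Phi)$, with $\Phi$ a Poisson process of intensity $2/\sqrt3$; the coupling of \refcoupling transfers the resulting ``some cluster crosses this box'' estimate from $\Phi$ to $\Pi_t$ on each box of a renormalised lattice, up to an error that vanishes as $t\to\infty$ uniformly over translated boxes; and a contour argument then rules out an infinite component of $R_r(\Pi_t)$ once $t$ is large.

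Part (ii) is the supercritical direction, and this is where the result of Sinclair and Stauffer~\cite{SS10} enters. Fix $r>r^\star$, so $R_r(\Phi)$ percolates. By the standard renormalisation of supercritical continuum percolation (long-rectangle crossings and the block constructions of~\cite{MR}), given any $\delta>0$ one can choose a scale $L$ and a ``good box'' event, measurable with respect to $\Phi$ restricted to a bounded neighbourhood of the box, of probability at least $1-\delta$, whose occurrence forces horizontal and vertical crossings of the box in $R_r(\Phi)$ that connect to the crossings of the neighbouring boxes. Picking $\delta$ small enough that the standard stochastic-domination comparison for finite-range-dependent site percolation on $\mathbb Z^2$ (Liggett, Schonmann and Stacey) applies, one then invokes \refcoupling on that now fixed bounded neighbourhood to conclude that for all large $t$ the analogous good-box event for $\Pi_t$ has probability at least $1-2\delta$; the good-box indicators over the translated windows form a finite-range-dependent field, so the induced renormalised process dominates a supercritical Bernoulli site percolation, and the union of good boxes contains an infinite component of $R_r(\Pi_t)$.

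The main obstacle is the tension, in Part (ii), between \refcoupling---which compares $\Pi_t$ with a genuine Poisson process only inside a single bounded window, and only up to an error that is small merely for $t$ large---and percolation, which is a property of the whole plane: one cannot ask the coupling to succeed simultaneously on the infinitely many windows of the renormalised lattice. The way around this is to fix the renormalisation scale $L=L(\delta)$ first and only afterwards let $t\to\infty$, so that what must percolate is a finite-range-dependent field of good boxes with marginals arbitrarily close to $1$. The role of~\cite{SS10} is to certify that this transfer is legitimate, i.e.\ that the sparse, local discrepancy between $\Pi_t$ and a true Poisson process of intensity $2/\sqrt3$ left uncontrolled by the coupling does not destroy supercritical percolation; verifying in addition that the errors in Parts (i) and (ii) are uniform over the translated boxes is the only remaining non-routine point.
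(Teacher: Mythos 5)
Your overall strategy is the paper's: split the statement into $\liminf_{t\to\infty}r_\critical(t)\ge r^\star$ and $\limsup_{t\to\infty}r_\critical(t)\le r^\star$, obtain the first from the machinery of Theorem~\ref{thm:percolation} and the second from the Sinclair--Stauffer coupling fed into a Penrose--Pisztora/Liggett--Schonmann--Stacey block argument. Two steps, however, are stated in a form that would fail. In Part (i) you assert that ``the coupling of \refcoupling{} transfers the crossing estimate from $\Phi$ to $\Pi_t$.'' \refcoupling{} gives domination from \emph{below}: it says $\Pi_t$ \emph{contains} a Poisson process, which is useless for excluding percolation. The subcritical direction needs the opposite coupling, Theorem~\ref{thm:largetime}, which dominates $\Pi_t$ from above --- but only by a Poisson process of intensity $\frac{2}{\sqrt{3}}+c\sqrt{\delta}>\frac{2}{\sqrt{3}}$, and only inside $C(p,\delta)$. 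Hence $r<r^\star$ is not by itself enough; you must convert the intensity excess into a radius deficit via the scaling identity~\eqref{eq:rscaling}, deduce $\liminf_t r_\critical(t)\ge r_\critical^{\lambda_0}$ with $\lambda_0=\frac{2}{\sqrt{3}}+c\sqrt{\delta}$, and then let $\delta\to 0$, which is exactly how the paper reaches~\eqref{eq:liminf}. Since your top-level instruction is to rerun the proof of Theorem~\ref{thm:percolation}, this is a misattribution rather than a fatal error, but the direction of the stochastic domination is the whole point of that half of the argument.

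The more substantive problem is in Part (ii): your resolution of the local-versus-global tension --- ``fix the renormalisation scale $L=L(\delta)$ first and only afterwards let $t\to\infty$'' --- is precisely the order of limits that does not work here. The coupling of \refcoupling{} produces the dominated Poisson process only on an inner window of side $K'\le K-c_2\sqrt{\Delta\log\epsilon^{-1}}$; with the outer window of fixed size and $\Delta=t\to\infty$, this inner window is empty. Moreover, at a fixed absolute scale the good-box indicators are not finite-range dependent uniformly in $t$: a node of $\Pi_0$ reaches windows at distance of order $\sqrt{t}$ with non-negligible probability, so the dependence range measured in blocks is of order $\sqrt{t}/L\to\infty$ and \cite[Theorem~1.3]{LSS97} cannot be applied with a bounded dependence degree. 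The paper's fix is to let the block scale grow with $t$: hexagons of side $\delta\sqrt{t}$, coupling neighbourhoods $J_i$ of radius $C\delta\sqrt{t}$ so that each good-box variable depends on a number of others bounded by a constant depending only on $C$ (Lemma~\ref{lem:behaved}), and good-box probabilities of the form $1-e^{-c\delta^3 t}-e^{-c\delta\sqrt{t}}$, which still tend to $1$ because the crossing-failure probability from \cite[Theorem~2]{PenPis} is exponentially small in the growing box size. With these two corrections your argument coincides with the paper's proof.
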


In order to prove Theorems~\ref{thm:percolation} and~\ref{thm:radius}, we devote most of Section~\ref{sec:largetime} to prove Theorem~\ref{thm:largetime} below, which 
we believe to be of independent interest. 
Consider a tessellation of $\mathbb{R}^2$ into regular hexagons of side length $\delta \sqrt{t}$, where $\delta$ is an arbitrarily small constant.
Let $I$ denote the set of points of $\mathbb{R}^2$ that are the centers of these hexagons. Then, for each $i\in I$, 
denote the hexagon with center at $i$ by $Q_i$ and
define a Bernoulli random variable $X_i$ with parameter $p$ independently of the 
other $X_j$, $j\neq i$. 
Define 
$$
   C(p,\delta) = \bigcup_{i\in I \colon X_i=1}Q_i.
$$
When $p>1/2$, which is the critical probability for site percolation on the triangular lattice,
we have that $C(p,\delta)$ contains a
unique infinite connected component.
We are now ready to state our main technical result, whose proof is given in Section~\ref{sec:largetime}.

\begin{theorem}\label{thm:largetime}
   There exists a universal constant $c>0$ and, for any $p\in(0,1)$ that can be arbitrarily close to $1$ and any arbitrarily small $\delta>0$, 
   there exists a $t_0>0$ such that, for all $t>t_0$, we can
   couple $\Pi_t$, $(X_i)_{i\in I}$ and a Poisson point process $\Phi$ of intensity $\frac{2}{\sqrt{3}}+c\sqrt{\delta}$, which is independent of $\Pi_0$, so that
   $$
      R(\Pi_t) \cap C(p,\delta) \subset R(\Phi) \cap C(p,\delta).
   $$
\end{theorem}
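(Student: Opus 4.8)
The strategy is to produce the inclusion by stochastically dominating $\Pi_t$, restricted to the ``good'' region $C(p,\delta)\oplus B(0,1/2)$, by a Poisson process of the stated intensity, and then to couple via Strassen's theorem. The starting point is that $\Pi_t$ is the superposition over the nodes $u\in\Pi_0$ of independent single--point processes, the $u$-th being $\{u+\zeta_u(t)\}$ with $u+\zeta_u(t)$ distributed as $u+N(0,tI)$. Writing $g_t$ for the $N(0,tI)$ density, a Poisson summation (Jacobi theta) estimate gives, uniformly in $x$,
$$
   \sum_{u\in\Pi_0}g_t(x-u)=\tfrac{2}{\sqrt3}\bigl(1+O(e^{-ct})\bigr),
$$
so $\Pi_t$ has an essentially flat local intensity $2/\sqrt3$; this accounts for the leading term in the target intensity.

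First I would prove a \emph{local domination}: for every bounded Borel set $A$ with $|A|\le\sqrt\delta\,t$, the restriction $\Pi_t\cap A$ is stochastically dominated by a Poisson process of intensity $\bigl(\tfrac2{\sqrt3}+c_1\sqrt\delta\bigr)\mathbf{1}_A$, with $c_1$ universal. Indeed, the $u$-th contribution to $A$ is a Bernoulli point process: one point with probability $q_u=\pr{u+\zeta_u(t)\in A}$, placed with density $g_t(\cdot-u)\mathbf{1}_A/q_u$; and $q_u\le|A|\sup g_t=|A|/(2\pi t)\le\sqrt\delta/(2\pi)$ because $A$ is small on the scale $\sqrt t$. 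A Bernoulli point process with success probability $q$ is dominated by a Poisson process with intensity $\log\frac1{1-q}\ (\le q(1+q))$ times its conditional location density, and domination is preserved under independent superposition; summing over $u$ and invoking the intensity estimate above gives the claim once $t$ is large. In particular this applies to $A=Q_i^{+}:=Q_i\oplus B(0,1/2)$ and to blocks of boundedly many hexagons.

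The main step, and the main obstacle, is to upgrade this to a \emph{global} domination on the good region. Set $W=\bigcup_{i\colon X_i=1}Q_i^{+}$, with $(X_i)$ the given i.i.d.\ $\mathrm{Ber}(p)$ field, independent of $\Pi_t$; I claim that for $t$ exceeding a threshold depending on $p$ and $\delta$, $\Pi_t\cap W$ is stochastically dominated by a Poisson process of intensity $\mu=\tfrac2{\sqrt3}+c\sqrt\delta$. One cannot omit $W$: $\Pi_t$ is far too rigid on large scales to be dominated by any Poisson process (a set of area $a$ is $\Pi_t$--empty with probability only $\exp(-\Theta(a^2/t))\ll e^{-\mu a}$ once $a\gg t$), so the role of $C(p,\delta)$ is precisely to provide ``holes'': the deleted hexagons $\{Q_i:X_i=0\}$ form a site percolation of density $1-p$, and for a test set $A$ with $|A|\gtrsim\sqrt\delta\,t$ the event that \emph{every} hexagon meeting $A\oplus B(0,1/2)$ is deleted already forces $\Pi_t\cap W$ empty on $A$, at probabilistic cost $(1-p)^{\Theta(|A|/(\delta^2t))}\ge e^{-\mu|A|}$ for $t$ large; for small $A$ the local domination applies directly. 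Turning this into a genuine verification of stochastic domination---controlling all decreasing events, not just emptiness, and in particular balancing the rare configurations in which $W$ happens to contain an all--good region larger than $\Theta(t)$ against their small probability---is where the work lies. The key point is that the error terms here are made small by letting $t\to\infty$ (the per--hexagon failure probabilities decay like $e^{-\mathrm{poly}(\delta)\,t}$, and the percolation costs above carry a factor $1/t$), so no upper bound on $p$ is needed and $p$ may be taken arbitrarily close to $1$.

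Finally, Strassen's theorem applied to the domination of $\Pi_t\cap W$ yields a coupling of $\Pi_t$, $(X_i)$ and a Poisson process $\Phi$ of intensity $\mu$ with $\Pi_t\cap W\subseteq\Phi$; carrying out the coupling conditionally on the pair $(\Pi_t,(X_i))$ keeps their joint law, and $\Phi$ is independent of $\Pi_0$ since $\Pi_0$ is a fixed set (or, if $\Pi_0$ is given an independent random shift to make the picture stationary, because the whole construction is shift--equivariant). To conclude: if $x\in C(p,\delta)\cap R(\Pi_t)$, say $x\in Q_i$ with $X_i=1$, choose $y\in\Pi_t$ with $|x-y|\le1/2$; then $y\in Q_i\oplus B(0,1/2)=Q_i^{+}\subseteq W$, hence $y\in\Pi_t\cap W\subseteq\Phi$ and $x\in B(y,1/2)\subseteq R(\Phi)$. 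Thus $R(\Pi_t)\cap C(p,\delta)\subseteq R(\Phi)\cap C(p,\delta)$, as required.
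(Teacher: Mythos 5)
There is a genuine gap, and it stems from a misreading of what the theorem asks for. You take $(X_i)$ to be an i.i.d.\ $\mathrm{Ber}(p)$ field \emph{independent of} $\Pi_t$, and then try to prove that $\Pi_t$ restricted to the resulting random region $W$ is stochastically dominated by a Poisson process --- a statement you yourself cannot complete (``controlling all decreasing events \dots is where the work lies''). That unfinished step is precisely the content of the theorem, so the proposal does not constitute a proof. Worse, the independent-$(X_i)$ version is a strictly stronger claim than what is stated or needed: the theorem only asks for a \emph{coupling} of the triple $(\Pi_t,(X_i),\Phi)$, so the $X_i$ are allowed to depend on $\Pi_t$. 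The paper exploits exactly this freedom: it defines indicator variables $Y_i$ recording whether the local domination succeeds in $Q_i$ (i.e.\ whether an auxiliary Poisson sample $\widetilde\Pi$ of intensity $(1+\sqrt\delta)2/\sqrt3$ has at least as many points in $Q_i$ as the nodes landing there), shows $\pr{Y_i=1}\to 1$ as $t\to\infty$ via a Chernoff bound, observes that $(Y_i)$ has \emph{finite range of dependence} (because the coupling only credits a node as ``well behaved'' if its displacement is at most $C\delta\sqrt t$, so $Y_i$ is determined by nodes starting in $\cup_{j\in J_i}Q_j$), and then applies Liggett--Schonmann--Stacey to extract an i.i.d.\ $\mathrm{Ber}(p)$ field $(X_i)$ dominated by $(Y_i)$. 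The good region is thus by construction a region where domination holds, and the global domination problem you are stuck on never arises. Nodes whose motion cannot be so controlled are handled separately: they are Poissonized already at time $0$ (a $\mathrm{Poisson}(\mu)$ number of copies at each lattice site, with $e^{-\mu}$ the well-behaved probability) so that, by thinning, at time $t$ they form a Poisson process whose intensity is shown to be $O(\sqrt\delta)$; this supplies the $c\sqrt\delta$ correction.

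Your preliminary local step is essentially sound and in fact gives a cleaner route to the per-hexagon estimate than the paper's: the Poisson-summation identity $\sum_{u}g_t(x-u)=\tfrac2{\sqrt3}(1+O(e^{-ct}))$ together with the domination of a Bernoulli($q$) single-point process by a Poisson process of total mass $\log\tfrac1{1-q}\le q(1+q)$, and closure of domination under independent superposition, correctly yields that $\Pi_t\cap A$ is dominated by a Poisson process of intensity $\tfrac2{\sqrt3}+O(\sqrt\delta)$ on any set of area $O(\sqrt\delta\,t)$. But to turn local domination into the theorem you must either (a) make the good region adaptive, as the paper does (for which you additionally need a \emph{finite-dependence} structure on the local success indicators --- something your superposition argument does not provide, since each lattice node contributes to every hexagon with positive probability; the paper manufactures it via the well-behaved/displacement-truncation device), or (b) actually prove the annealed domination of $\Pi_t\cap W$ with $W$ independent, which your emptiness and counting heuristics do not establish. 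As written, the argument stops exactly where the theorem begins.
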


In words, Theorem~\ref{thm:largetime} establishes that $\Pi_t$ is \emph{stochastically dominated} by a Poisson point process inside $C(p,\delta)$. As we show later in Lemma~\ref{lem:notentirer2},
$\Pi_t$ cannot be stochastically dominated by a Poisson point process in the whole of $\mathbb{R}^2$.

We note that the opposite direction of Theorem~\ref{thm:largetime} was established by Sinclair and Stauffer~\cite[Proposition~4.1]{SS10}, who proved that, under some conditions on the initial location of the nodes, 
after moving as independent Brownian motions for time $t$, the nodes
\emph{stochastically dominate} a Poisson point process. 
The result of Sinclair and Stauffer has been used and refined in~\cite{PSSS11,S11}, and turned out to be a useful tool in the analysis of increasing events for models of mobile nodes, such as the 
so-called percolation time in~\cite{PSSS11} and detection time in~\cite{S11}. We expect that 
the ideas in our proof of Theorem~\ref{thm:largetime} can help in the analysis of \emph{decreasing} events, which have so far received less attention.

We remark that the proof of our Theorem~\ref{thm:largetime} requires a more delicate analysis than that of Sinclair and Stauffer. In their case, nodes that ended up moving 
atypically far away during the time $[0,t]$ could be simply disregarded as it is possible to show that the typical nodes already stochastically dominate a Poisson point process. 
In our setting, no node can be disregarded, regardless
of how atypical its motion turns out to be. In order to solve this problem, we first consider what we call well-behaved nodes, which among other things satisfy that 
their motion during $[0,t]$ is contained in some ball of radius $c\sqrt{t}$ for some large constant $c$
(we defer the complete definition of well-behaved nodes to Section~\ref{sec:largetime}). 
The definition of well-behaved nodes is carefully specified so that 
any given node is likely to be well-behaved and, in addition, it is possible to show that well-behaved nodes are 
stochastically dominated by a Poisson point process inside $C(p,\delta)$. For the remaining nodes, which comprise only a small density of nodes, 
we use a sprinkling argument to replace them already at time $0$ by a (non-homogeneous) Poisson point process of low intensity. Then, even though the motion of the nodes that are not well behaved is hard to control,
we use the fact that they are a Poisson point process at time $0$ 
to show that, at time $t$, they stochastically dominate a Poisson point process of low intensity. 

Now, for the case when the nodes of $\Pi_0$ move for only a small time $t$, we believe the following is true.
\begin{conjecture}\label{conj:smalltime}
   There exists a $t_0>0$ so that, for all $t<t_0$, $R(\Pi_t)$ contains an infinite component almost surely.
\end{conjecture}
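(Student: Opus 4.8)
The plan is to establish Conjecture~\ref{conj:smalltime} by a renormalization argument that reduces percolation of $R(\Pi_t)$ to a single finite-dimensional integral, which is then bounded by a Monte Carlo estimate --- in the same spirit in which Theorem~\ref{thm:percolation} is made to rest on the Balister, Bollob\'as and Walters estimate for $\lambda_\critical$. Fix a large integer $L$ and a renormalized block lattice indexed by $\mathbb{Z}^2$: let $\{Q_i\}_{i\in\mathbb{Z}^2}$ tile $\mathbb{R}^2$ by translates of a fixed parallelogram of side $L$ (in units of the spacing of $\mathcal{T}$) adapted to $\mathcal{T}$, and let $N_i\supset Q_i$ be a bounded concentric enlargement. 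Declare block $i$ to be \emph{good} if the disks of radius $1/2$ centered at the time-$t$ positions of the nodes whose starting point lies in $\Pi_0\cap N_i$ contain a connected component realizing the crossings of $Q_i$ prescribed by a standard block argument for continuum percolation; the block shapes and crossing events are chosen, as usual, so that if two neighbouring blocks are both good then their guaranteed crossing components intersect. Since this event is \emph{increasing} in the collection of disks --- extra disks only merge components --- the implication ``good $+$ good $\Rightarrow$ intersect'' needs no control on nodes outside $N_i$, so an infinite cluster of good blocks produces an infinite component of $R(\Pi_t)$, and $\{i\text{ good}\}$ depends only on the time-$t$ displacements of the boundedly many nodes in $\Pi_0\cap N_i$.

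Since $N_i$ and $N_j$ are disjoint whenever $i$ and $j$ are sufficiently far apart in $\mathbb{Z}^2$, the field $(\ind{i\text{ good}})_{i\in\mathbb{Z}^2}$ is $k$-dependent for a fixed $k=k(L)$. By the domination theorem of Liggett, Schonmann and Stacey (or a direct Peierls bound over $\ast$-circuits of bad blocks), it suffices to exhibit one $L$ and one $t>0$ with $\pr{i\text{ good}}\ge p^\ast(k)$, where $p^\ast(k)<1$ is the marginal above which a $k$-dependent field dominates a supercritical Bernoulli site percolation on $\mathbb{Z}^2$. The reason this is reachable is that $\pr{i\text{ good}}$ is an explicit integral: only the time-$t$ positions matter, so conditionally on the $M=M(L)$ displacement vectors $\zeta_u(t)$ of the nodes starting in $N_i$ the good event is a deterministic Borel subset of $\mathbb{R}^{2M}$, and hence $\pr{i\text{ good}}$ equals one fixed $2M$-dimensional Gaussian integral, which can be estimated by Monte Carlo with a Chernoff (or Hoeffding) confidence interval; if the certified lower bound exceeds $p^\ast(k)$, the conjecture follows at that confidence level. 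That the bound should be attainable is seen by letting $t\to0$ after rescaling: the edge model on $\mathcal{T}$ in which two adjacent nodes are joined when their perturbed positions remain within distance $1$ converges to the $1$-dependent model in which each edge of $\mathcal{T}$ is open with probability exactly $\tfrac12$ (since $|u-v|=1$ and the displacement difference is an isotropic Gaussian), and since $\tfrac12$ lies well above $p_c^{\mathrm{bond}}(\mathcal{T})=2\sin(\pi/18)\approx0.347$ this limit model should be strictly supercritical; then for all small enough $t$ the relevant block-crossing probabilities tend to $1$ as $L\to\infty$, and choosing $L$ large and then $t$ small yields the inequality.

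The principal obstacle is exactly the one that keeps Theorem~\ref{thm:percolation} conditional: the Monte Carlo step provides only a high-confidence statement, not a proof, which is why the assertion is stated here only as a conjecture. Beyond that, the genuine difficulty is a tension in the choice of parameters: the domination theorem forces $p^\ast(k)$ close to $1$, making $\pr{i\text{ good}}$ that close to $1$ forces $L$ large (so that the block-crossing probability of the effective supercritical model is correspondingly close to $1$), but the dimension $2M\asymp L^2$ of the integral to be sampled then grows, and certifying $\pr{i\text{ good}}\ge p^\ast$ requires sample sizes that must grow as $p^\ast\to1$ (one must essentially observe no failures), so $L$, $t$ and the sample size have to be balanced so that the certified bound clears the bar --- careful but routine engineering of the constants. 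A secondary point, which the numerics implicitly test, is confirming that the $t\to0$ limit model is comfortably supercritical despite the $O(1)$ correlations among the edges incident to a common node; were it only critical, no finite $L$ would suffice, but the nearest-neighbour bond density $\tfrac12$ sitting well above $p_c^{\mathrm{bond}}(\mathcal{T})$ is the reason to expect otherwise. Finally, the bookkeeping behind ``good neighbours have intersecting crossing components'' must be carried out with the hexagonal geometry of $R(\Pi_0)$ in mind, but that part is standard.
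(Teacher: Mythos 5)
Your overall strategy --- renormalize to a finitely dependent percolation process, invoke a domination theorem to reduce everything to a single block-marginal bound, and observe that this marginal is a fixed finite-dimensional Gaussian integral accessible to Monte Carlo --- is exactly the route the paper takes (it uses pairs of adjacent hexagons of side $50$, a $1$-dependent \emph{bond} process on the renormalized triangular lattice, and the explicit Balister--Bollob\'as--Walters threshold $0.8639$ in place of a generic $p^\ast(k)$ from Liggett--Schonmann--Stacey, which is what keeps the dimension of the integral and the required confidence level within reach of simulation). Both you and the paper correctly leave the statement conditional on the Monte Carlo certificate; that part is not the problem.

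The genuine gap is the quantifier ``for all $t<t_0$''. Your scheme certifies $\pr{i\text{ good}}\ge p^\ast$ at the one value of $t$ at which the integral is sampled, and hence percolation at that single $t$ only. Your substitute --- that after rescaling the $t\to0$ limit is a dependent bond model on $\mathcal{T}$ with edge-marginal $1/2>p_c^{\mathrm{bond}}(\mathcal{T})$, so block crossings should be good uniformly for all small $t$ --- is a heuristic about an entire one-parameter family of models and cannot be certified by any single finite computation; moreover, supercriticality of that dependent limit model is itself unproved, and the square-lattice discussion in Section~\ref{sec:packings} shows that a marginal near a critical value together with $O(1)$ dependence is exactly where such heuristics are untrustworthy. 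The paper closes this gap with a monotonicity device: the crossing event $A_t$ is defined to require that the witnessing path of $R(\Pi_t)$ \emph{also} crosses $H_1$ and $H_2$ at time $0$, and Lemma~\ref{lem:monotone} (via the Brownian scaling coupling $\zeta_u(s)=\sqrt{s/s'}\,\zeta'_u(s')$ and the elementary fact that $\|x_1\|_2\le1$ and $\|x_1+x_2\|_2\le1$ imply $\|x_1+\delta x_2\|_2\le1$ for $\delta\in(0,1)$) shows that the probability of preserving a fixed time-$0$ configuration of adjacencies is non-increasing in $t$. Hence the single estimate $\pr{A_{\epsilon_0}}>0.8639$ propagates to all $\epsilon\le\epsilon_0$. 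As a side benefit, anchoring the event at time $0$ localizes it to nodes starting in $H_1\cup H_2$, which is what yields exact $1$-dependence for disjoint edges. You would need to build an analogous time-$0$ anchoring and monotonicity step into your block events; without it the argument proves strictly less than the conjectured statement.
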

We are able to establish the conjecture above given a Monte Carlo estimate for a finite but high dimensional integral. We discuss the details in Section~\ref{sec:smalltime}.
Note that if Conjecture~\ref{conj:smalltime} is true, then a curious consequence of this and Theorem~\ref{thm:radius} is that $r_\critical(t)$ is not monotone in $t$. 

We conclude in Section~\ref{sec:open} with some extensions and open problems.

\section{Stochastic Domination}\label{sec:stochdom}

We devote this section to the proof of our main technical result, Theorem~\ref{thm:largetime}, 
where we study the behavior of the balls after they have moved for a time $t$ that is sufficiently
large. In Section~\ref{sec:largetime} we show how to adapt this proof to establish Theorems~\ref{thm:percolation} and~\ref{thm:radius}.

Intuitively, as $t\to\infty$, $\Pi_t$ looks like an independent Poisson point process of intensity $2/\sqrt{3}$. 
Since the intensity of $\Phi$ is larger than $2/\sqrt{3}$, we would like to argue that there exists a coupling between $\Phi$ and $\Pi_t$ such that 
$\Phi$ contains $\Pi_t$. Unfortunately, this cannot be achieved in the whole of $\mathbb{R}^2$, as established by the lemma below, which gives that, for any fixed $t$, the probability that 
a sufficiently large region $S \subset \mathbb{R}^2$ contains no node of $\Pi_t$ is smaller than $\exp\left(-\left(\frac{2}{\sqrt{3}}+c\sqrt{\delta}\right)\vol{S}\right)$, which is the probability that $\Phi$ has no node in $S$.
Hence, $\Phi$ cannot stochastically dominate $\Pi_t$ in the whole of $\mathbb{R}^2$.
\begin{lemma}\label{lem:notentirer2}
   Fix $t$ sufficiently large and let $S$ be a hexagon of side length $(\log t)\sqrt{t}$ obtained as the union of $(\log t)^2 t$ triangles of $\mathcal{T}$. 
   Then, there exists a positive constant $c'$ such that
   $$
      \pr{\Pi_t \cap S = \emptyset} \leq \exp\left(-c'(\log t)^2 \vol{S}\right).
   $$
\end{lemma}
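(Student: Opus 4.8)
The plan is to show that the event $\{\Pi_t \cap S = \emptyset\}$ is much less likely than the corresponding event for a Poisson process of intensity $\tfrac{2}{\sqrt 3}$, by exploiting the fact that the nodes of $\Pi_0$ start out on a deterministic lattice rather than being Poisson-distributed. The key point is that for $S$ to be empty, \emph{every} node of $\Pi_0$ that starts near $S$ must move away from $S$; since Brownian motion run for time $t$ typically moves only $O(\sqrt t)$, a node starting at distance $d$ from $S$ has probability at most $\exp(-cd^2/t)$ of avoiding $S$, and these events are independent across nodes. Summing $d^2/t$ over the $\Theta((\log t)^2 t)$ nodes of $\mathcal T$ lying \emph{inside} $S$ (each of which must travel a distance comparable to the inradius of $S$, which is $\Theta((\log t)\sqrt t)$, to escape) already produces a bound far smaller than $\exp(-c'(\log t)^2 \vol S)$; in fact even the nodes near the center contribute a factor $\exp(-c(\log t)^2)$ each, and there are $\Theta((\log t)^2 t)$ of them.

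More concretely, first I would fix a constant $\kappa \in (0,1)$ and let $S' \subset S$ be the concentric hexagon of side length $\kappa (\log t)\sqrt t$, so that every node of $\Pi_0$ lying in $S'$ is at distance at least $(1-\kappa)\cdot\tfrac{\sqrt 3}{2}(\log t)\sqrt t$ from $\partial S$. Let $N$ be the number of nodes of $\Pi_0 = V(\mathcal T)$ in $S'$; since $\mathcal T$ has density $\tfrac{2}{\sqrt 3}$ and $\vol{S'} = \kappa^2 \vol S$, we have $N \geq c_1 (\log t)^2 t$ for $t$ large. For each such node $u$, the event $A_u = \{u + \zeta_u(t) \notin S\}$ requires $\|\zeta_u(t)\| \geq (1-\kappa)\tfrac{\sqrt3}{2}(\log t)\sqrt t =: \rho$. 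By the standard Gaussian tail bound, $\pr{A_u} \le \pr{\|\zeta_u(t)\| \ge \rho} \le C\exp(-\rho^2/(2t)) = C\exp(-c_2 (\log t)^2)$. Since $\{\Pi_t \cap S = \emptyset\} \subseteq \bigcap_{u \in S'} A_u$ and the Brownian motions are independent,
$$
   \pr{\Pi_t \cap S = \emptyset} \le \prod_{u \in S' \cap V(\mathcal T)} \pr{A_u} \le \big(C e^{-c_2(\log t)^2}\big)^{N} \le \exp\!\big(N(\log C - c_2 (\log t)^2)\big).
$$
For $t$ large enough that $c_2(\log t)^2 \ge 2\log C$, the exponent is at most $-\tfrac{c_2}{2} N (\log t)^2 \le -\tfrac{c_1 c_2}{2}(\log t)^4 t$, which is certainly bounded above by $-c'(\log t)^2 \vol S$ since $\vol S = \Theta((\log t)^2 t)$ and $(\log t)^4 t / \big((\log t)^2 \cdot (\log t)^2 t\big) = 1$; choosing $c'$ small enough (and absorbing constants) gives the claim.

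The only mild obstacle is bookkeeping: one must make sure the inequality $\{\Pi_t \cap S = \emptyset\} \subseteq \bigcap_{u \in S'} A_u$ is used correctly (it is, trivially, since $u + \zeta_u(t) \in S$ for some $u \in S'$ would put a point of $\Pi_t$ in $S$), and one must be slightly careful that $S$, being a \emph{closed} hexagon, contains its boundary — but this only helps, as it enlarges the target set the node must avoid. I would also note that the scaling constants ($\tfrac{\sqrt3}{2}$ for the inradius of a unit-side hexagon, $\tfrac{2}{\sqrt3}$ for the lattice density) are exactly those appearing elsewhere in the paper, so no new estimates are needed. No sharpness in $c'$ is required, so all constants can be chosen generously.
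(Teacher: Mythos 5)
Your proposal is correct and follows essentially the same route as the paper: restrict attention to a concentric sub-hexagon $S'$, observe that every lattice node there must travel a distance of order $(\log t)\sqrt{t}$ to leave $S$, apply the Gaussian tail bound to each such node, and multiply the (independent) escape probabilities to get a bound of order $\exp(-c(\log t)^4 t)$, which beats the required $\exp(-c'(\log t)^2\vol{S})$. The only cosmetic differences are that the paper fixes $\kappa=1/2$ and measures the escape distance by comparing the inball of $S$ with the circumball of $S'$, whereas you use a general $\kappa$ and the difference of inradii.
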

\begin{proof}
   For simplicity we assume that $(\log t)\sqrt{t}$ is an even integer.
   Let $x$ be the middle point of $S$ and 
   consider the hexagon $S'$ of side length $\frac{\log t}{2} \sqrt{t}$ composed of $\frac{(\log t)^2}{4} t$ triangles of $\mathcal{T}$ and centered at $x$. 
   Note that $S$ contains the ball $B\left(x,\frac{\sqrt{3}\log t}{2}\sqrt{t}\right)$ and $S'$ is contained in the ball $B\left(x,\frac{\log t}{2}\sqrt{t}\right)$.
   Therefore, a node of $\Pi_0$ that is inside $S'$ can only be outside of $S$ at time $t$ if it moves at least $\frac{(\sqrt{3}-1)\log t}{2}\sqrt{t} \geq \frac{\log t}{3}\sqrt{t}$.
   For any fixed node $u\in \Pi_0$, we have from the Gaussian tail bound (cf.\ Lemma~\ref{lem:gaussiantail}) that 
   $$
      \pr{\|\zeta_u(t)\|_2 \geq \frac{\log t}{3}\sqrt{t}} \leq \frac{3}{\sqrt{2\pi} \log t} \exp\left(-\frac{(\log t)^2}{18}\right).
   $$
   Each node of $\Pi_0$ belongs to $6$ triangles of $\mathcal{T}$, then there are at least $\frac{(\log t)^2}{24}t$ nodes of $\Pi_0$ in $S'$. Since each of them need to move more than $\frac{\log t}{3}\sqrt{t}$
   by time $t$ for $S$ to contain no node of $\Pi_t$, we obtain
   $$
      \pr{\Pi_t \cap S = \emptyset} 
      \leq \left(\frac{3}{\sqrt{2\pi} \log t} \exp\left(-\frac{(\log t)^2}{18}\right)\right)^{\frac{(\log t)^2}{24}t}
      \leq \exp\left(-\frac{(\log t)^4t}{432}\right).
   $$
   Since $\vol{S}=\frac{3\sqrt{3}}{2}(\log t)^2 t$, the proof is completed.
\end{proof}

Now we turn to the proof of Theorem~\ref{thm:largetime}.
The goal is to show that $\Phi$ contains $\Pi_t$ inside a percolating cluster of a suitable tessellation of $\mathbb{R}^2$.
For this, we tessellate $\mathbb{R}^2$ into hexagons of side length $\delta\sqrt{t}$. We take this tessellation in such a way that no point of $\Pi_0$ lies on the
edges of the hexagons; this is not crucial for the proof but simplifies the explanations in the sequel. Let $\mathcal{H}$ denote the set of these hexagons.
Consider a node $v\in \Pi_0$. Let $Q_i$ be the hexagon of $\mathcal{H}$ 
that contains $v$ and let $v'$ be a copy of $v$ located at the same position as $v$ at time $0$. 
We let $v'$ move up to time $t$ according to a certain procedure that we will describe in a moment, and then we say that $v$ is 
\emph{well behaved} if we are able to couple the motion of $v$ with the motion of $v'$ so that $v$ and $v'$ are at the same location
at time $t$. Recall that $I$ is the set of points given by the centers of the hexagons in $\mathcal{H}$.
For $i\in I$, we define
$$
   J_i = \Big\{ j\in I \colon \sup_{x\in Q_i, y\in Q_j}\|x-y\|_2 \leq C\delta \sqrt{t} \Big\},
$$
where
$$
   C=4\delta^{-3/2}.
$$
For $i,j$ such that $j\in J_i$ we say that $i$ and $j$ are neighbors.
Now we describe the motion of $v'$. Let $f_t$ be the density function for the location of a Brownian motion at time $t$ given that it starts at the origin
of $\mathbb{R}^2$. We fix $t$ and, for each $i,j\in I$ such that $i$ and $j$ are neighbors, we let 
\begin{equation} 
   \varphi_t(i,j) = \inf_{x\in Q_i,y\in Q_j} f_t(y-x).
   \label{eq:defvarphi}
\end{equation}
If $i$ and $j$ are not neighbors we set $\varphi_t(i,j)=0$.
Then, the motion of $v'$ is described by first choosing a $j\in J_{i}$ with probability proportional to 
$\varphi_t(i,j)$ and then placing $v'$ uniformly at random in $Q_j$. The main intuition behind this definition is that, when
$v$ is well behaved, its position inside $Q_j$ has the same distribution as that of a node of a Poisson point process inside $Q_j$. Therefore,
as long as the number of well behaved nodes that end up in $Q_j$ is smaller than the number of nodes in $\Phi\cap Q_j$, we will be able to couple them
with $\Phi$. Another important feature for the definition of well behaved nodes is that, if $v$ is well behaved and ends up moving to hexagon $Q_j$, then 
we know that, at time $0$, $v$ was in some hexagon of $J_j$. In particular, there is a bounded number of hexagons from which $v$ could have moved to $Q_j$, which 
allows us to control dependences.

Now we show that nodes are likely to be well behaved.
Since the area of each hexagon of $\mathcal{H}$ is 
$\frac{3\sqrt{3}}{2}\delta^2t$, we have that 
\begin{equation}
   \pr{\text{$v$ is well behaved}} = \sum_{j\in J_{i}} \frac{3\sqrt{3}}{2}\delta^2t \varphi_t(i,j).
   \label{eq:wellbehaved}
\end{equation}
The idea is that $\delta$ is sufficiently small so that $f_t$ varies very little (i.e., $f_t$ is essentially constant) 
inside any given hexagon of $\mathcal{H}$, but, at the same time, $C\delta$ is large so that the probability that 
$v$ moves to an hexagon that is not in $J_i$ is small.
We can then obtain in the lemma below that the probability that $v$ is well behaved is large.
\begin{lemma}\label{lem:behaved}
   Let $v$ be a node of $\Pi_0$ located in $Q_i$. We have
   $$
      (C-3)^2\leq |J_i| \leq \frac{4}{3}C^2,
   $$
   and, for sufficiently large $t$, we have
   $$
      \pr{\text{$v$ is well behaved}} \geq 1-5\delta.
   $$
\end{lemma}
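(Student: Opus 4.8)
The plan is to handle the two assertions separately. For the bound on $|J_i|$, I would argue geometrically. A hexagon $Q_j$ lies in $J_i$ precisely when every point of $Q_j$ is within distance $C\delta\sqrt t$ of every point of $Q_i$; since each hexagon has diameter $2\delta\sqrt t$, this is sandwiched between the conditions ``$\mathrm{dist}(Q_i,Q_j)\le (C-2)\delta\sqrt t$'' and ``$\mathrm{dist}(Q_i,Q_j)\le C\delta\sqrt t$'' on center-to-center distance. Counting hexagons of area $\tfrac{3\sqrt3}{2}\delta^2 t$ whose centers lie in a disk of radius roughly $(C-2)\delta\sqrt t$ (resp.\ $C\delta\sqrt t$) around $i$, and being slightly careful about boundary hexagons, gives the lower bound $(C-3)^2$ and the upper bound $\tfrac{4}{3}C^2$ after absorbing constants into the area ratio $\pi/(\tfrac{3\sqrt3}{2})$. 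This step is elementary packing/covering bookkeeping and I would not belabor it.

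For the main estimate, I would start from the exact identity~\eqref{eq:wellbehaved}, namely $\pr{\text{$v$ is well behaved}} = \sum_{j\in J_i}\tfrac{3\sqrt3}{2}\delta^2 t\,\varphi_t(i,j)$ with $\varphi_t(i,j)=\inf_{x\in Q_i,y\in Q_j} f_t(y-x)$. The complementary probability is $1$ minus this sum, and I would split the ``loss'' into two contributions. First, the mass that a genuine Brownian motion places outside $\bigcup_{j\in J_i}Q_j$: by the definition of $J_i$ and the value $C=4\delta^{-3/2}$, escaping this region forces a displacement of at least (roughly) $C\delta\sqrt t = 4\delta^{-1/2}\sqrt t$, which by the Gaussian tail bound (Lemma~\ref{lem:gaussiantail}, cited in the excerpt) has probability at most something like $\exp(-\Theta(\delta^{-1}))$, far smaller than $\delta$ for small $\delta$. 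Second, and this is the real term, the discretization error inside $\bigcup_{j\in J_i}Q_j$: for each neighbor $j$ we are replacing $\int_{Q_j} f_t(y-v)\,dy$ by $\tfrac{3\sqrt3}{2}\delta^2 t\cdot\inf_{x\in Q_i,y\in Q_j}f_t(y-x)$, so the loss is $\int_{Q_j}\big(f_t(y-v)-\inf_{x\in Q_i,y'\in Q_j}f_t(y'-x)\big)\,dy$.

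To bound the discretization loss I would use that $f_t$ is the centered Gaussian density with variance $t$ in each coordinate, so $\nabla \log f_t(z) = -z/t$ and hence the relative oscillation of $f_t$ over a set of diameter $D$ located at distance $\|z\|$ from the origin is controlled by $\|z\| D/t$ plus a $D^2/t$ correction. Over $Q_i\cup Q_j$, which has diameter $O(C\delta\sqrt t)=O(\delta^{-1/2}\sqrt t)$ and over which the relevant argument $y-x$ ranges within $O(C\delta\sqrt t)$ of any fixed value, the ratio $\inf f_t / f_t(y-v)$ on $Q_j$ is at least $1 - O(C\delta\sqrt t)\cdot(\|y-v\|/t) - O(C^2\delta^2)$. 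Summing $\int_{Q_j} f_t(y-v)\cdot O\big(C\delta\|y-v\|/\sqrt t + C^2\delta^2\big)\,dy$ over $j\in J_i$ and extending the integral to all of $\mathbb{R}^2$, the first part becomes $O(C\delta/\sqrt t)\cdot\E{\|\zeta_v(t)\|_2} = O(C\delta/\sqrt t)\cdot O(\sqrt t) = O(C\delta)$, and with $C=4\delta^{-3/2}$ this is $O(\delta^{-1/2})$ — which is the wrong order.

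So the delicate point, and what I expect to be the main obstacle, is that a crude first-order Taylor bound on the oscillation of $f_t$ is not good enough: one must exploit a cancellation. The fix is that the oscillation bound should be taken relative to the value of $f_t$ at a well-chosen reference point of $Q_i\cup Q_j$ (e.g.\ the point realizing the infimum, or the center), and crucially one should measure the diameter of a single hexagon $Q_j$ (which is $2\delta\sqrt t$, not $C\delta\sqrt t$) for the variation of $f_t(y-v)$ as $y$ ranges over $Q_j$ with $v$ fixed, while the cross term coming from varying $x\in Q_i$ contributes only the $x$-oscillation which is again over diameter $2\delta\sqrt t$. In other words, $\inf_{x\in Q_i,y'\in Q_j}f_t(y'-x) \ge f_t(y-v)\cdot\big(1 - O(\delta\sqrt t)\cdot\sup_{z\in Q_i-v, \, w\in Q_j-v}\|w-z\|/t - O(\delta^2)\big)$, and now summing against $f_t(y-v)\,dy$ gives $O(\delta/\sqrt t)\cdot\E{\|\zeta_v(t)\|_2 + O(\delta\sqrt t)} = O(\delta)$. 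Combining the $O(\delta)$ discretization loss with the exponentially small escape probability yields $\pr{\text{$v$ is well behaved}}\ge 1 - O(\delta)$, and a careful tracking of the absolute constants in the Gaussian oscillation and tail estimates gives the stated $1-5\delta$ for $t$ large enough (``large enough'' enters only to make the sub-exponential escape term negligible and to control lower-order $1/\sqrt t$ corrections). I would organize the write-up as: (i) geometry of $J_i$; (ii) escape probability via Lemma~\ref{lem:gaussiantail}; (iii) per-hexagon discretization bound using the gradient of $\log f_t$ over a single hexagon's diameter; (iv) summation and collection of constants.
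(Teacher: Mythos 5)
Your proposal is correct and follows essentially the same route as the paper: the same packing/covering argument for $|J_i|$, and for the main estimate the same key observation that, by the triangle inequality, the per-hexagon infimum $\varphi_t(i,j)$ loses only an $O(\delta\sqrt{t})$-shift (one hexagon's scale, not $C\delta\sqrt{t}$) worth of Gaussian mass, so the total discretization loss integrates to $O(\delta)$ while the escape probability beyond $J_i$ is $e^{-\Theta(1/\delta)}$. The only difference is cosmetic: you linearize the exponential and bound the loss by $O(\delta/\sqrt{t})\cdot\E{\|\zeta_v(t)\|_2}$, whereas the paper keeps the shifted Gaussian $\exp\left(-\left(\|z-i\|_2+3\delta\sqrt{t}\right)^2/2t\right)$ and evaluates it via an inscribed square and one-dimensional tail bounds; both yield the constant $5$ provided you take the hexagon center (rather than $v$) as the reference point, as you note.
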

\begin{proof}
   For $j\not\in J_i$, we know, by definition, that there exist a $x_0\in Q_i$ and a $y_0\in Q_j$ such that $\|x_0-y_0\|_2>C\delta\sqrt{t}$.
   Then, by the triangle inequality, we have that, for any $y\in Q_j$, 
   \begin{equation}
      \|y-i\|_2 \geq C\delta\sqrt{t}-\|y-y_0\|_2-\|i-x_0\|_2\geq C\delta\sqrt{t}-3\delta\sqrt{t},
      \label{eq:triang}
   \end{equation}
   where we used the fact that, for any two points $y,y_0$ in the same hexagon, we have $\|y-y_0\|_2 \leq 2\delta\sqrt{t}$ and, for any 
   $x\in Q_i$ we have $\|i-x\|_2\leq \delta\sqrt{t}$.
   Therefore, if we add balls of radius $\delta\sqrt{t}$ centered at each $j\in J_i$, 
   these balls cover the whole of $B(i,C\delta\sqrt{t}-3\delta\sqrt{t})$, which yields
   $$
      |J_i| \geq \frac{\vol{B(i,C\delta\sqrt{t}-3\delta\sqrt{t})}}{\vol{B(0,\delta\sqrt{t})}} 
      = \left(C-3\right)^2.
   $$
   For the other direction, note that if we add balls of radius $\frac{\sqrt{3}}{2}\delta\sqrt{t}$ centered at each $j\in J_i$, these balls are disjoint and 
   their union is contained in $B(i,C\delta\sqrt{t})$, which gives
   $$
      |J_i| \leq \frac{\vol{B(i,C\delta\sqrt{t})}}{\vol{B\left(0,\frac{\sqrt{3}}{2}\delta\sqrt{t}\right)}}
      = \frac{4}{3}C^2.
   $$
   
   Now we prove the second part of the lemma. 
   Note that, using~\eqref{eq:wellbehaved} and~\eqref{eq:defvarphi}, we have 
   \begin{eqnarray}
      \pr{\text{$v$ is well behaved}}
      &=& \sum_{j\in J_i} \frac{3\sqrt{3}}{2}\frac{\delta^2t}{2\pi t}\exp\left(-\frac{\sup_{x \in Q_i, y\in Q_j}\|x-y\|_2^2}{2t}\right)\nonumber\\
      &\geq& \sum_{j\in J_i} \int_{Q_j} \frac{1}{2\pi t}\exp\left(-\frac{(\|z-i\|_2+3\delta\sqrt{t})^2}{2t}\right)\,dz,
      \label{eq:boundphi}
   \end{eqnarray}
   where the last step follows by the triangle inequality.   
   Now, from~\eqref{eq:triang}, the ball $B(i,C\delta\sqrt{t}-3\delta\sqrt{t})$ only intersects hexagons that are neighbors of $i$. 
   We denote by $S_a=[-a/2,a/2]^2$ the square of side length $a$, and, 
   for any $z=(z_1,z_2)\in\mathbb{R}^2$ and $a\in \mathbb{R}_+$, we use the inequality $(\|z\|_2+a)^2\leq (|z_1|+a)^2+(|z_2|+a)^2$.
   Then, applying~\eqref{eq:boundphi}, we obtain
   \begin{eqnarray*}
      \pr{\text{$v$ is well behaved}}
      &\geq& \int_{B(0,C\delta\sqrt{t}-3\delta\sqrt{t})}\frac{1}{2\pi t}\exp\left(-\frac{(\|z\|_2+3\delta\sqrt{t})^2}{2t}\right)\,dz\\
      &\geq& \int_{S_{\frac{2C\delta\sqrt{t}-6\delta\sqrt{t}}{\sqrt{2}}}}\frac{1}{2\pi t}\exp\left(-\frac{(\|z\|_2+3\delta\sqrt{t})^2}{2t}\right)\,dz\\
      &\geq& \left(2\int_{3\delta\sqrt{t}}^{\frac{C\delta\sqrt{t}+3(\sqrt{2}-1)\delta\sqrt{t}}{\sqrt{2}}}\frac{1}{\sqrt{2\pi t}}\exp\left(-\frac{z_1^2}{2t}\right)\,dz_1\right)^2\\
      &\geq& \left(1-\frac{6\delta}{\sqrt{2\pi}} 
         - \frac{2}{\sqrt{\pi}(C+3(\sqrt{2}-1))\delta}\exp\left(-\frac{\delta^2(C+3(\sqrt{2}-1))^2}{4}\right)\right)^2\\
      &\geq& 1-\frac{12\delta}{\sqrt{2\pi}} 
         - \frac{4}{\sqrt{\pi}C\delta}\exp\left(-\frac{\delta^2C^2}{4}\right),
   \end{eqnarray*}
   where the second to last step follows by the standard Gaussian tail bound (cf.\ Lemma~\ref{lem:gaussiantail}). Then, using the value of $C$, we have that 
   $\frac{4}{\sqrt{\pi}C\delta}e^{-\delta^2C^2/4}\leq \frac{1}{\sqrt{\pi}}e^{-4}$ for all $\delta\in[0,1]$. 
   Then, the right-hand side above is at least $1-\delta\left(\frac{12}{\sqrt{2\pi}}+\frac{e^{-4}}{\sqrt{\pi}}\right)\geq 1-5\delta$.
\end{proof}

We will treat the nodes that are not well behaved by means of another point process. 
For any point $x\in\mathbb{R}^2$, we set $q(x)=i$ if $x\in Q_i$.
Then, let $g_t(x,y)$ be the density function for a node $v$ that is not well behaved to move from $x$ to $y$ after time $t$. We have that 
\begin{equation}
   g_t(x,y) = \frac{f_t(y-x)-\varphi_t(q(x),q(y))}{\pr{\text{$v$ is not well behaved}}}.
   \label{eq:defg}
\end{equation}
For each $v\in \Pi_0$, let $N_v(\mu)$ be a Poisson random variable with mean $\mu$, and let $\Psi_0(\mu)$ be the point process obtained by putting 
$N_v(\mu)$ points at $v$ for each $v\in\Pi_0$. 
We set $e^{-\mu}=\pr{\text{$v$ is well behaved}}$ and, from Lemma~\ref{lem:behaved} and the fact that $\delta$ is sufficiently small, we henceforth assume that 
$\mu\leq 1$. We can then use a standard coupling argument so that $N_v(\mu)\geq 1$ if and only if $v$ is not well behaved.
The intuition is that, by replacing each node of $\Pi_0$ that is not well behaved by a Poisson number of nodes, we can exploit the thinning property of 
Poisson random variables to show that, as the nodes move, they are stochastically dominated by a Poisson point process.

For each $w\in \Psi_0(\mu)$, let $\xi_w(t)$ be the position of $w$ at time $t$ according to the 
density function $g_t$.
Define $\Psi_t(\mu)$ to be the point process obtained by 
$$
   \Psi_t(\mu) = \bigcup_{w\in \Psi_0(\mu)}\xi_w(t).
$$
The following lemma gives that $\Psi_t(\mu)$ is stochastically dominated by a Poisson point process. 
\begin{lemma}\label{lem:problematic}
   Let $e^{-\mu}$ be the probability that a node of $\Pi_0$ is well behaved.
   For $t$ sufficiently large, 
   there exists a universal constant $c>0$ such that, if $\widetilde \Psi$ is a Poisson point process with intensity $c\sqrt{\delta}$,
   then it is possible to couple $\widetilde \Psi$ with $\Psi_t(\mu)$ so that $\Psi_t(\mu)\subseteq \widetilde \Psi$.
\end{lemma}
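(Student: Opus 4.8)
The plan is to show that $\Psi_t(\mu)$ is stochastically dominated by a Poisson point process by exploiting the fact that $\Psi_0(\mu)$ is already a (non-homogeneous) Poisson point process at time $0$, together with the mapping/thinning theorems for Poisson processes. First I would observe that, since each node $v\in\Pi_0$ carries an independent $\mathrm{Poisson}(\mu)$ number of points $N_v(\mu)$, the process $\Psi_0(\mu)$ is a Poisson point process on $\mathbb{R}^2$ whose intensity measure is $\mu$ times the counting measure on $\Pi_0$. Now each point $w\in\Psi_0(\mu)$ is independently displaced to a position distributed according to the density $g_t(v,\cdot)$ (where $v$ is the node carrying $w$); by the mapping theorem for Poisson processes, $\Psi_t(\mu)$ is again a Poisson point process on $\mathbb{R}^2$, with intensity measure
$$
   \Lambda_t(A) = \mu \sum_{v\in\Pi_0} \int_A g_t(v,y)\,dy, \qquad A\subseteq\mathbb{R}^2.
$$
Hence it suffices to bound the density of $\Lambda_t$ with respect to Lebesgue measure by a universal constant times $\sqrt{\delta}$: if $\frac{d\Lambda_t}{dy}(y)\le c\sqrt{\delta}$ for all $y$, then $\Psi_t(\mu)$ is dominated by a homogeneous Poisson process $\widetilde\Psi$ of intensity $c\sqrt{\delta}$, and the coupling $\Psi_t(\mu)\subseteq\widetilde\Psi$ is the standard one obtained by first sampling $\widetilde\Psi$ and then thinning.

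So the core of the argument is the pointwise bound
$$
   \mu \sum_{v\in\Pi_0} g_t(v,y) \leq c\sqrt{\delta}
   \qquad\text{for all } y\in\mathbb{R}^2.
$$
Fix $y$ and let $j=q(y)$. From~\eqref{eq:defg}, $g_t(v,y) = \bigl(f_t(y-v)-\varphi_t(q(v),j)\bigr)/\pr{\text{$v$ is not well behaved}}$, and by Lemma~\ref{lem:behaved} the denominator is at least $1-5\delta$, while $\mu \leq 1$ so $\mu = -\log\pr{\text{$v$ well behaved}} \leq -\log(1-5\delta) \leq 6\delta$ for $\delta$ small. The key cancellation is that $f_t(y-v)-\varphi_t(q(v),j)$ is small: when $q(v)\in J_j$, we have $\varphi_t(q(v),j) = \inf_{x\in Q_{q(v)}, z\in Q_j} f_t(z-x)$, so the difference is at most the oscillation of $f_t$ over a region of diameter $O(\delta\sqrt t)$ around $y-v$, which is $O(\delta)$ times the supremum of $f_t$ there; summing $f_t(y-v)$ over $v\in\Pi_0$ gives $O(1)$ (a Riemann-sum comparison to $\int f_t = 1$, using that $\Pi_0$ has density $2/\sqrt3$ and $f_t$ is slowly varying at scale $\delta\sqrt t$), so this part contributes $O(\delta)$. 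When $q(v)\notin J_j$, we have $\varphi_t(q(v),j)=0$ and $g_t(v,y)$ picks up the full $f_t(y-v)$; but by~\eqref{eq:triang} this forces $\|y-v\|_2 \geq (C-3)\delta\sqrt t$, so the sum of $f_t(y-v)$ over such $v$ is bounded by the Gaussian tail $\pr{\|\zeta\|_2 \geq (C-3)\delta\sqrt t}$-type estimate times the local density, which with $C=4\delta^{-3/2}$ is $\exp(-\Omega(\delta^{-1}))$, negligible compared to $\sqrt\delta$. Combining, $\mu\sum_v g_t(v,y) \leq 6\delta\cdot O(1)/(1-5\delta) \leq c\sqrt\delta$ (in fact $O(\delta)$, but $\sqrt\delta$ is the safe bound and is all we need).

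The main obstacle I anticipate is making the oscillation/Riemann-sum estimate fully rigorous and uniform in $y$ and $t$: one must control $\sum_{v\in\Pi_0}\bigl(f_t(y-v)-\varphi_t(q(v),q(y))\bigr)$ without losing the factor $\delta$, which requires carefully splitting $\Pi_0$ according to which hexagon each node lies in, bounding $f_t(y-v)-\inf_{x\in Q_{q(v)},z\in Q_{q(y)}}f_t(z-x)$ by $\sup$ of $|\nabla f_t|$ over a slightly enlarged region times the diameter $O(\delta\sqrt t)$, and then checking that $\sqrt t\,\sup|\nabla f_t|$ summed against the node density is $O(1/\sqrt t)\cdot O(t) = O(1)$ uniformly (the Gaussian density $f_t$ has $\|\nabla f_t\|_\infty = O(t^{-3/2})$ and the relevant region has area $O(t)$ worth of nodes only where $f_t$ is non-negligible). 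I would also need to handle the edge case where $q(v)\in J_j$ but the diameter bound $C\delta\sqrt t$ is not tight, which is exactly where the definition of $J_i$ and the constant $C=4\delta^{-3/2}$ are calibrated. Once the pointwise intensity bound is established, the domination statement and the thinning coupling are immediate from standard Poisson process theory.
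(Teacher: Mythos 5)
Your overall strategy is exactly the paper's: recognize $\Psi_t(\mu)$ as a Poisson point process (marking/mapping), reduce the lemma to a pointwise bound on its intensity, and prove that bound by splitting $\Pi_0$ into nodes whose hexagon is a neighbor of $q(y)$ (where the cancellation $f_t-\varphi_t$ is an oscillation estimate) and the rest (where a Gaussian tail at distance $\gtrsim C\delta\sqrt t$ kills everything). The paper's proof is the same argument with a three-way split $P_1,P_2,P_3$. However, one step of your accounting is wrong as written: you claim ``by Lemma~\ref{lem:behaved} the denominator is at least $1-5\delta$.'' The denominator of $g_t$ in~\eqref{eq:defg} is $\pr{\text{$v$ is \emph{not} well behaved}}=1-e^{-\mu}$, which Lemma~\ref{lem:behaved} bounds \emph{above} by $5\delta$, not below by $1-5\delta$. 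So $g_t(v,y)$ is not of order $f_t-\varphi_t$; it is of order $(f_t-\varphi_t)/\mu$, i.e.\ large, and the factor $\mu$ you extract separately is precisely what cancels this. The correct bookkeeping (the paper's) is $\mu\sum_v g_t(v,y)=\frac{\mu}{1-e^{-\mu}}\sum_v\bigl(f_t(y-v)-\varphi_t(q(v),q(y))\bigr)$ with $\frac{\mu}{1-e^{-\mu}}\le 2$ for $\mu\le 1$; there is no extra factor of $\delta$ coming from $\mu$, and the entire burden of the estimate falls on showing $\sum_v\bigl(f_t(y-v)-\varphi_t(q(v),q(y))\bigr)=O(\sqrt\delta)$. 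Your final display ``$6\delta\cdot O(1)/(1-5\delta)$'' therefore gets the right answer for the wrong reason (it double-counts the smallness), though the conclusion survives because your oscillation estimate, done correctly, already delivers what is needed without any help from $\mu$.

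A second, smaller inaccuracy: the oscillation of $f_t$ over a displacement of order $\delta\sqrt t$ around a point at distance $r$ from $y$ is of order $f_t(r)\bigl(r\delta/\sqrt t+\delta^2\bigr)$, which is \emph{not} uniformly $O(\delta)f_t$; for $r\sim C\delta\sqrt t=4\delta^{-1/2}\sqrt t$ the relative oscillation is $O(C\delta^2)=O(\sqrt\delta)$. The paper simply uses this uniform $O(C\delta^2)$ bound together with $\int f_t\le 1$, which is where the $\sqrt\delta$ in the statement comes from (see~\eqref{eq:p2}); your sharper claim of $O(\delta)$ requires keeping the weight $\|y-v\|/\sqrt t$ inside the Gaussian-weighted sum and using $\int f_t(z)\|z\|\,dz=O(\sqrt t)$, which is doable but is more than the lemma needs. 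With the denominator corrected and the oscillation bound stated as $O(C\delta^2)$, your proof closes and coincides with the paper's.
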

\begin{proof}
   Since the nodes of $\Phi_0(\mu)$ move independently of one another, we can apply the
   thinning property of Poisson random variables to obtain that $\Psi_t(\mu)$ is a Poisson point process. 
   Let $\Lambda(x)$ be the intensity
   of $\Psi_t(\mu)$ at $x\in \mathbb{R}^2$. 
   By symmetry of Brownian motion and the symmetry in the motion of well behaved nodes, we have that
   \begin{equation}
      \Lambda(x)
      = \sum_{v\in \Pi_0} \mu g_t(v,x)
      = \sum_{v\in \Pi_0} \mu g_t(x,v).
      \label{eq:sum}
   \end{equation}
   
   Recall that, for any $z\in \mathbb{R}^2$ and $\ell>0$, we define $z+S_{\ell}$ as the translation of the square $[0,\ell]^2$ so that its center is at $z$.
   Define the square $R_1$ as $x+S_{5\delta\sqrt{t}}$, the annulus $R_2$ as 
   $(x+S_{5C\delta\sqrt{t}})\setminus R_1$ and the region $R_3$ as 
   $\mathbb{R}^2\setminus (R_1\cup R_2)$.
   We split the sum in~\eqref{eq:sum} into three parts by considering the
   set of points $P_1 = \Pi_0\cap R_1$, 
   $P_2 = \Pi_0\cap R_2$ and 
   $P_3 = \Pi_0\cap R_3$.
   
   We start with $P_2$. 
   We can partition each hexagon of $\mathcal{H}$ into smaller hexagons of side length $\sqrt{3}/3$ such that each point of $\Pi_0$ is contained in exactly one such hexagon.
   This is possible since the dual lattice\footnote{Recall that the dual lattice of $\mathcal{T}$ is the lattice whose points are the faces of $\mathcal{T}$ 
   and two points are adjacent if their corresponding faces in $\mathcal{T}$ have a common edge.} 
   of $\mathcal{T}$ is a hexagonal lattice of side length $\sqrt{3}/3$, so the hexagons of side length $\sqrt{3}/3$ mentioned above can be obtained by translating and
   rotating the dual lattice of $\mathcal{T}$. 
   We denote by $\mathcal{H}'$ the set of hexagons of side length $\sqrt{3}/3$ obtained in this way.
   
   For each $z\in \mathbb{R}^2$, let $H_z$ be the hexagon that contains $z$ in $\mathcal{H}'$. 
   Each $H_z$ has side length $\sqrt{3}/3$ and area $\sqrt{3}/2$. 
   Thus, for any point $z\in R_2$, we have that $H_z\subset x+S_{5C\delta\sqrt{t}+4\sqrt{3}/3}$. 
   Let $R_2'= (x+S_{5C\delta\sqrt{t}+4\sqrt{3}/3}) \setminus R_1$, which gives that
   $$
      \sum_{v\in P_2} \mu g_t(x,v) \leq \frac{2}{\sqrt{3}} \int_{R_2'}\sup_{z'\in H_z}\mu g_t(x,z')\,dz.
   $$
   Now, note that $\frac{\mu}{\pr{\text{$v$ is not well behaved}}} = \frac{\mu}{1-e^{-\mu}} \leq \frac{1}{1-\mu/2}\leq 2$ since $\mu\leq 1$.
   Then, using the definition of $g_t$ from~\eqref{eq:defg} and the definition of $\varphi_t$ in~\eqref{eq:defvarphi},
   we have that 
   \begin{eqnarray*}
      \sum_{v\in P_2} \mu g_t(x,v)
      &\leq& \frac{4}{\sqrt{3}} \int_{R_2'}(\sup_{z'\in H_z}f_t(z'-x)-\varphi_t(q(x),q(z')))\,dz\\
      &\leq& \frac{4}{\sqrt{3}} \int_{R_2'}\left(\sup_{z'\in H_z}f_t(z'-x)-\inf_{x'\in Q_{q(x)},z''\in Q_{q(z')}}f_t(x'-z'')\right)\,dz.
   \end{eqnarray*}
   Now, by the triangle inequality, we have that $\|z'-x\|_2 \geq \|z-x\|_2-\|z-z'\|_2 \geq \|z-x\|_2-2\sqrt{3}/3$ and 
   $\|x'-z''\|_2\leq \|z-x\|_2+\|z-z''\|_2+\|x-x'\|_2 \leq \|z-x\|_2 + 4\delta\sqrt{t}$, which gives that 
   \begin{eqnarray*}
      \lefteqn{\sum_{v\in P_2} \mu g_t(x,v)}\\
      &\leq&
      \frac{4}{\sqrt{3}} \int_{R_2'}\frac{1}{2\pi t}
      \left(\exp\left(-\frac{(\|z-x\|_2-2\sqrt{3}/3)^2}{2t}\right)-\exp\left(-\frac{(\|z-x\|_2 + 4\delta\sqrt{t})^2}{2t}\right)\right)\,dz.
   \end{eqnarray*}
   Note that we can write 
   \begin{eqnarray*}
      \lefteqn{\exp\left(-\frac{(\|z-x\|_2-2\sqrt{3}/3)^2}{2t}\right)-\exp\left(-\frac{(\|z-x\|_2 + 4\delta\sqrt{t})^2}{2t}\right)}\\
      &=& \left(\exp\left(\frac{2\sqrt{3}\|z-x\|_2-2}{3t}\right)-\exp\left(-\frac{(4\|z-x\|_2\delta\sqrt{t} + 8\delta^2t)}{t}\right)\right)
      \exp\left(-\frac{\|z-x\|_2^2}{2t}\right).
   \end{eqnarray*}
   Now we use that, for $z\in R_2'$, we have $\|z-x\|_2 \leq \frac{5\sqrt{2}C}{2}\delta\sqrt{t}+2\sqrt{6}/3$.
   Then, the first exponential term above is $1+o(1)$ and, for the second exponential term, we can use the inequality $e^{-x}\geq 1-x$, which gives, as $t\to\infty$,
   \begin{eqnarray}
      \sum_{v\in P_2} \mu g_t(x,v)
      &\leq& \frac{4}{\sqrt{3}} (10\sqrt{2}C\delta^2+8\delta^2+o(1))\int_{R_2'}\frac{1}{2\pi t}\exp\left(-\frac{\|z-x\|_2^2}{2t}\right)\,dz\nonumber\\
      &\leq& c_1\sqrt{\delta} + o(1),
      \label{eq:p2}
   \end{eqnarray}
   for some universal constant $c_1>0$.
   
   For the terms of~\eqref{eq:sum} where $v\in P_3$ we have that $g_t(x,v)=\frac{f_t(x,v)}{\pr{\text{$v$ is not well behaved}}}$. Then,
   let $R_3'= \mathbb{R}^2 \setminus (x+S_{5C\delta\sqrt{t}-4\sqrt{3}/3})$ so that, for each $z\in R_3$, we have $H_z\subset R_3'$, which allows us to write
   $$
      \sum_{v\in P_3} \mu g_t(x,v) 
      \leq \frac{4}{\sqrt{3}} \int_{R_3'}\sup_{z'\in H_z}f_t(x,z')\,dz
      \leq \frac{4}{\sqrt{3}} \int_{R_3'}\frac{1}{2\pi t} \exp\left(-\frac{(\|z-x\|_2-2\sqrt{3}/3)^2}{2t}\right)\,dz.
   $$
   Now, letting $w=z-x$ and writing $w=(w_1,w_2)$ we have that 
   $$
      (\|w\|_2-2\sqrt{3}/3)^2 \geq (|w_1|-2\sqrt{3}/3)^2 + (|w_2|-2\sqrt{3}/3)^2 -4/3,
   $$
   which can be used to get the bound
   \begin{equation}
      \sum_{v\in P_3} \mu g_t(x,v) 
      \leq \frac{4}{\sqrt{3}} \exp\left(\frac{2}{3t}\right) 
         \left(2\int_{\frac{5C\delta\sqrt{t}}{2}-\frac{4\sqrt{3}}{3}}^\infty\frac{1}{\sqrt{2\pi t}} \exp\left(-\frac{w_1^2}{2t}\right)\,dw_1\right)^2
      \leq \frac{c_2}{C\delta}+o(1),
      \label{eq:p3}
   \end{equation}
   for some universal constant $c_2>0$.
   Finally, for the terms in~\eqref{eq:sum} with $v\in P_1$, we use that 
   $\mu g_t(v,x)\leq 2f_t(v,x) \leq \frac{1}{\pi t}$ for all $v,x$ which gives that 
   \begin{equation}
      \sum_{v\in P_1} \mu g_t(x,v) 
      \leq \frac{1}{\pi t} \frac{2}{\sqrt{3}} \left(5\delta\sqrt{t}+4\frac{\sqrt{3}}{3}\right)^2
      \leq c_3 \delta^2 + o(1),
      \label{eq:p1}
   \end{equation}
   for some universal constant $c_3>0$ and
   where $\frac{2}{\sqrt{3}} \left(5\delta\sqrt{t}+4\frac{\sqrt{3}}{3}\right)^2$ is an upper bound for the number of points in $P_1$.
   
   Plugging~\eqref{eq:p2},~\eqref{eq:p3} and~\eqref{eq:p1} into~\eqref{eq:sum} yields
   $$
      \Lambda(x) = \sum_{v\in \Pi_0} \mu g_t(x,v)
      \leq  c_1\sqrt{\delta} + \frac{c_2}{C\delta} + c_3\delta^2 + o(1).
   $$   
\end{proof}

We now proceed to the proof of Theorem~\ref{thm:largetime}.
\begin{proof}[{\bf Proof of Theorem~\ref{thm:largetime}}]
   We start by giving a high-level overview of the proof. 
   First, we assume that all nodes of $\Pi_0$ are well behaved. Then we consider a hexagon $Q_i$ of $\mathcal{H}$ and 
   count the number of such well behaved nodes that are inside $Q_i$ at time $t$. Note that, by the definition of well behaved nodes, given that
   a node is in $Q_i$ at time $t$, then its location is uniformly random in $Q_i$. Therefore, in order to show that they are stochastically dominated by
   a Poisson point process, it suffices to show that there are at most as many nodes of $\Pi_0$ in $Q_i$ at time $t$ as nodes of the Poisson point process. This will happen
   with a probability that can be made arbitrarily large by setting $t$ large enough. 
   We then use the fact that, since nodes are considered well behaved, a node 
   can only be in $Q_i$ at time $t$ if that node was inside a hexagon of $J_i$ at time $0$. Therefore, if we consider a hexagon $Q_j$ such that 
   $J_i \cap J_j=\emptyset$, we have that the well-behaved nodes that are able to be in $Q_i$ at time $t$ cannot end up in $Q_j$. Hence, the event 
   that the well-behaved nodes in $Q_i$ are stochastically dominated by a Poisson point process is independent of the event that the nodes in $Q_j$ 
   are stochastically dominated by
   a Poisson point process. This bounded dependency is enough to complete the analysis of well behaved nodes. On the other hand, to 
   handle nodes that are not well behaved, we add a discrete Poisson point process at each node of $\Pi_0$ so that the probability that we add at least one node 
   at a given $v\in\Pi_0$ is exactly the same as the probability that $v$ is not well behaved. Thus, this discrete Poisson point process contains
   the set of nodes that are not well behaved. We then use Lemma~\ref{lem:problematic} to conclude that 
   the nodes that are not well behaved are stochastically dominated by a Poisson point process, which concludes the proof.
   
   We now proceed to the rigorous argument.
   For each $v\in\Pi_0$, let $\xi'_v(t)$ be the position of $v$ at time $t$ given that $v$ is well behaved, and let 
   $$
      \Pi_t' = \bigcup_{v\in\Pi_0} \xi'_v(t).
   $$
   Note that, since $e^{-\mu}$ is the probability that a node is well behaved and $\Psi_0(\mu)$ is the point process obtained by adding a random number of 
   nodes to the points of $\Pi_0$ according to a Poisson random variable with mean $\mu$, then there exists a coupling so that 
   $$
      \Pi_t \subseteq \Pi_t' \cup \Psi_t(\mu).
   $$
   Lemma~\ref{lem:problematic} establishes that $\Psi_t(\mu)$ is stochastically dominated by a Poisson point process with intensity $c_1\sqrt{\delta}$ for 
   some universal constant $c_1>0$.
   It remains to show that $\Pi_t'$ is also stochastically dominated by a Poisson point process. Unfortunately, this is not true in the whole of 
   $\mathbb{R}^2$ as shown in Lemma~\ref{lem:notentirer2}. We will then consider the tessellation given by $\mathcal{H}$ and show that, for each hexagon $Q_i$ of the tessellation with
   $X_i=1$, where the $X_i$ are defined in the paragraph preceding Theorem~\ref{thm:largetime},
   $\Pi_t'$ is stochastically dominated by a Poisson point process $\widetilde \Pi$ of intensity $(1+\sqrt{\delta})2/\sqrt{3}$.
   
   In order to see this, for each $i\in I$, we define a binary random variable $Y_i$, which is $1$ if $\widetilde \Pi$ has more nodes in $Q_i$ than 
   $\Pi_t'$. Then, since each node of $\Pi_t'$ is well behaved, whenever $Y_i=1$, we can couple $\widetilde\Pi$ with $\Pi_t'$ such that 
   $\widetilde\Pi\supseteq \Pi_t'$ in $Q_i$. First we derive a bound for the number of nodes of $\Pi_t'$ inside $Q_i$. For each $v\in \Pi_0$, let 
   $Z_v$ be the indicator random variable for $\xi_v'(t)\in Q_i$. Then, since the probability that $\xi'_v(t)\in Q_i$ is proportional to $\varphi_t(q(v),i)$, the expected number of nodes of $\Pi_t'$ in $Q_i$ is 
   $$
      \sum_{v\in\Pi_0 \cap (\cup_{j\in J_i}Q_j)}\E{Z_v} 
      = \sum_{v\in\Pi_0 \cap (\cup_{j\in J_i}Q_j)} \frac{\varphi_t(q(v),i)}{M}
      = \sum_{v\in\Pi_0 \cap (\cup_{j\in J_i}Q_j)} \frac{\varphi_t(i,q(v))}{M}
      = 3\delta^2t,
   $$
   where $M$ is a normalizing constant so that $\sum_j \varphi_t(i,j)=M$ for all $i$.
   The last step follows since $\sum_{v\in\Pi_0 \cap (\cup_{j\in J_i}Q_j)} \varphi_t(i,q(v))=3\delta^2 t \sum_{j\in J_i} \varphi_t(i,j)=3\delta^2tM$.
   A simpler way to establish the equation above is by using stationarity and noting that 
   $3\delta^2t$ is the number of points of $\Pi_0$ in $Q_i$. 
   Since the random variables $Z_v$ are mutually independent, we can apply a Chernoff bound for 
   Binomial random variables (cf. Lemma~\ref{lem:cbbinomial}) to get
   $$
      \pr{\sum_{v\in\Pi_0 \cap (\cup_{j\in J_i}Q_j)} Z_v \geq (1+\sqrt{\delta}/2)3\delta^2t}
      \leq \exp\left(-\frac{2(\sqrt{\delta}/2)^2(3\delta^2t)^2}{3\delta^2t|J_i|}\right) 
      \leq \exp\left(-\frac{9\delta^3t}{8C^2}\right) ,
   $$
   where the last step follows from Lemma~\ref{lem:behaved}.
   Using a standard Chernoff bound for Poisson random variables (cf. Lemma~\ref{lem:cbpoisson}) we have
   $$
      \pr{\text{$\widetilde \Pi$ has less than $(1+\sqrt{\delta}/2)3\delta^2t$ nodes in $Q_i$}}
      \leq \exp\left(-\frac{\delta(3\delta^2t)}{2(1+\sqrt{\delta})}\right).
   $$
   Therefore, we obtain a constant $c_2$ such that 
   \begin{equation}
      \pr{Y_i=1} \geq 1-\exp\left(-\frac{c_2\delta^3t}{C^2}\right).
      \label{eq:yi}
   \end{equation}
   The random variables $Y$ are not mutually independent. However, note that $Y_i$ depends only on the random variables $Y_{i'}$ for which $J_{i'}\cap J_i\neq \emptyset$.
   This is because, for any $i\in I$, only the nodes that are inside hexagons $Q_j$ with $j\in J_i$ can contribute to 
   $Y_i$. Therefore, using Lemma~\ref{lem:behaved}, we have that $Y_i$ depends on at most $\left(\frac{4}{3}C^2\right)^2$ other random variables $Y$.
   By having $t$ large enough, we can make the bound in~\eqref{eq:yi} be arbitrarily close to $1$. This allows us to apply a result of 
   Liggett, Schonmann and Stacey~\cite[Theorem~1.3]{LSS97}, which gives that the random field $(Y_i)_{i\in I}$ stochastically dominates a field $(Y'_i)_{i\in I}$ of
   independent Bernoulli random variables satisfying
   $$
      \pr{Y_i'=1} \geq 1 - \exp\left(-\frac{c_3\delta^3t}{C^6} \right),
   $$
   for some positive constant $c_3$. So, with $t$ sufficiently large, we can assure 
   that $\pr{Y_i'=1}$ is larger than $p$ in the statement of Theorem~\ref{thm:largetime}.
   Then, we have that, whenever $Y_i'=1$, the Poisson point process $\widetilde \Pi \cup \Psi_t(\mu)$ stochastically dominates 
   $\Pi_t$ inside $Q_i$. Since $\widetilde \Pi$ and $\Psi_t(\mu)$ are independent Poisson point processes, we have that their union is 
   also a Poisson point process of intensity no larger than
   \begin{equation}
      \frac{2}{\sqrt{3}} +\frac{2}{\sqrt{3}}\sqrt{\delta} + c_1\sqrt{\delta},
      \label{eq:intensity}
   \end{equation}
   which completes the proof of Theorem~\ref{thm:largetime}.
\end{proof}

\section{Large time}\label{sec:largetime}
In this section we give the proofs of Theorems~\ref{thm:percolation} and~\ref{thm:radius}. 
Here we will use some steps of the proof of Theorem~\ref{thm:largetime} without repeating the details. For this reason, we suggest the reader to read the proof of Theorem~\ref{thm:largetime}
before embarking in the proofs in this section.

\begin{proof}[{\bf Proof of Theorem~\ref{thm:percolation}}]
   For each $i\in I$, let $N_i$ be the set of hexagons $Q_j$ such that $Q_i$ and $Q_j$ intersect. Now, define a binary random variable $\widetilde Y_i$ to be $1$ if $Y_j=1$ for all $j\in N_i$ and the largest component of 
   $R((\cup_{j\in N_i}Q_j) \cap (\widetilde \Pi \cup \Psi_t(\mu)))$ has diameter smaller than $\delta \sqrt{t}/10$. (Recall the definition of $Y_i$ from the paragraph preceding~\eqref{eq:yi}.) 
   If $\lambda_\critical>\frac{2}{\sqrt{3}}$, we can set $\delta$ small enough so that the intensity of $\widetilde \Pi \cup \Psi_t(\mu)$, which is bounded above by~\eqref{eq:intensity}, is smaller than $\lambda_\critical$. Then,
   using~\eqref{eq:yi} and a result of Penrose and Pisztora~\cite[Theorem~2]{PenPis}, 
   we have that, for any given $i$ and sufficiently large 
   $t$,
   \begin{equation}
      \pr{\widetilde Y_i=1} \geq 1 - 6\exp\left(-\frac{c_2\delta^3t}{C^2}\right) - \exp\left(-c_4\delta\sqrt{t} \right),
      \label{eq:ytilde}
   \end{equation}
   for some universal constant $c_4>0$. Also, the $\widetilde Y_i$ depends only on the random variables $Y_{i'}$ for which $J_{i'}\cap J_j\neq \emptyset$ for all $j\in N_i$. 
   Hence, $\widetilde Y_i$ depends on no more than $7\left(\frac{4}{3}C^2\right)^2$ other $\widetilde Y_j$ since $|N_i|=7$.
   Thus, as in the proof of Theorem~\ref{thm:largetime}, 
   we can apply the result of Liggett, Schonmann and Stacey~\cite[Theorem~1.3]{LSS97} to conclude that $(\widetilde Y_i)_i$ stochastically dominates a random field of independent Bernoulli random variables with mean $\widetilde p$,
   which can be made arbitrarily close to $1$ by having $t$ large enough. 
   As a consequence, we have that, almost surely, the set of hexagons with $\widetilde Y_i=0$ has only finite components.
   With this, we can conclude that $R(\Pi_t)$ has no infinite component almost surely since 
   the coupling between $\Pi_t$, $(\widetilde Y_i)_i$, $\widetilde \Pi$ and $\Psi_t(\mu)$ 
   gives that a connected component of $R(\Pi_t)$ can only intersect two non-adjacent edges of an hexagon $i\in I$ if $\widetilde Y_i=0$. But we showed that any set of intersecting hexagons with $\widetilde Y_i=0$ is finite and, therefore,
   must be surrounded by hexagons $j$ with $\widetilde Y_j=1$, which are not crossed by $R(\Pi_t)$. Hence, all components of $R(\Pi_t)$ are finite.
\end{proof}

\begin{proof}[{\bf Proof of Theorem~\ref{thm:radius}}]
   In this proof we will use some steps from the proof of Theorem~\ref{thm:percolation}.
   Let $r_\critical^\lambda$ be the critical radius for percolation of the Boolean model with intensity $\lambda$.
   Therefore, if $t$ is large enough and we add balls of radius $r<r_c^{\lambda_0}$ centered at the points of $\Pi_t$, where $\lambda_0$ is the intensity of $\widetilde \Pi\cup \Psi_t(\mu)$, which is bounded
   above by~\eqref{eq:intensity},
   we have that~\eqref{eq:ytilde} holds and the union of the balls do not have an infinite component.
   Now, we have that the Boolean model with intensity $\lambda$ and radius $r$ is equivalent (up to scaling) to the Boolean model with intensity $\bar \lambda$ and $\bar r$ provided 
   $\lambda r^2=\bar \lambda \bar r^2$. Therefore, for any $\epsilon>0$, we have that 
   \begin{equation}
      r_\critical^{\lambda + \epsilon} = r_\critical^\lambda \sqrt{\frac{\lambda}{\lambda+\epsilon}}.
      \label{eq:rscaling}
   \end{equation}
   Using this we obtain that, for any $\delta>0$, 
   $$
      \liminf_{t\to\infty} r_\critical(t) 
      \geq r_\critical^{\lambda_0}
      = r_\critical^{2/\sqrt{3}}\sqrt{\frac{2/\sqrt{3}}{2\sqrt{3}+c\sqrt{\delta}}},
   $$
   where $c>0$ is a universal constant.
   Since $\delta$ can be arbitrarily close to 0, we obtain 
   \begin{equation}
      \liminf_{t\to\infty} r_\critical(t)\geq r_\critical^{2/\sqrt{3}}.
      \label{eq:liminf}
   \end{equation}
   
   Now, to obtain an upper bound for $r_\critical(t)$, we use a result of Sinclair and Stauffer~\cite[Proposition~4.1]{SS10}, which can be stated as follows. 
   \begin{proposition}[{\cite[Proposition~4.1]{SS10}}]\label{pro:ss10}
      Consider 
      a square $S$ of side length $K$ tessellated into subsquares of side length $\ell$, and assume that each subsquare contains at least $\beta \ell^2$ nodes at time $0$. 
      Denote the nodes by $\Xi_0$ and let $\Xi_\Delta$ be the point process obtained after the
      nodes of $\Xi_0$ have moved as independent Brownian motions for time $\Delta$. Then, there are positive constants $c_1,c_2,c_3$ such that, for any $\epsilon>0$, 
      if $\Delta \geq c_1 \ell^2/\epsilon^2$, we can couple $\Xi_0$ with an independent Poisson point process of intensity $(1-\epsilon)\beta$ so that, 
      inside a square $S'$ of side length $K'\leq K-c_2\sqrt{\Delta \log \epsilon^{-1}}$ with the same center as $S$, 
      $\Xi_\Delta$ contains the nodes of the Poisson point process 
      with probability at least $1 - e^{-c_3 \epsilon^2\beta \ell^2 }$. 
   \end{proposition}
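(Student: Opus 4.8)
The plan is to exhibit an explicit coupling in which a thinned sub-collection of the moved nodes is \emph{itself} a Poisson point process of the desired intensity inside $S'$, valid on a ``good'' event whose complement has probability at most $e^{-c_3\epsilon^2\beta\ell^2}$. Write $f_\Delta$ for the time-$\Delta$ Gaussian transition density of planar Brownian motion, and for each subsquare $Q$ of the tessellation of $S$ let $z_Q$ be its centre and $n_Q\ge\beta\ell^2$ the number of nodes it contains at time $0$, listed in some fixed but arbitrary order. Since we want domination from below, we may ignore as many of these nodes as convenient.

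The heart of the argument is a ``Poissonisation from below'' of the counts together with a thinning designed to cancel the (uncontrolled) exact starting positions. Independently for each $Q$, let $P_Q\sim\mathrm{Poisson}\big((1-\epsilon/4)\beta\ell^2\big)$ and set $\mathcal G=\bigcap_Q\{P_Q\le n_Q\}$. On $\mathcal G$, look at the Brownian trajectories of the first $P_Q$ nodes of $Q$; if such a node started at $x\in Q$ and ended at $y$, retain it with probability
\[
   \frac{(1-\epsilon/4)\,f_\Delta(y-z_Q)}{f_\Delta(y-x)}\,\mathbf{1}\{\|y-z_Q\|_2\le C\sqrt{\Delta\log\epsilon^{-1}}\},
\]
where $C$ is an absolute constant to be fixed. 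Two features make this work. First, for $\|y-z_Q\|_2\le C\sqrt{\Delta\log\epsilon^{-1}}$ and $x\in Q$ one has $\|y-x\|_2^2-\|y-z_Q\|_2^2=O\big(\ell\sqrt{\Delta\log\epsilon^{-1}}+\ell^2\big)$, so $f_\Delta(y-x)\ge(1-\epsilon/4)f_\Delta(y-z_Q)$ and the retention probability is at most $1$; this is exactly where the lower bound on $\Delta$ enters (a logarithmic correction being absorbed into $c_1$). Second, the endpoint density $f_\Delta(\,\cdot-x)$ of the node cancels against the denominator, so a retained node of $Q$ contributes a point whose intensity measure is $(1-\epsilon/4)f_\Delta(\,\cdot-z_Q)\mathbf{1}_{W_Q}$, with $W_Q=B(z_Q,C\sqrt{\Delta\log\epsilon^{-1}})$, \emph{independently of where in $Q$ it started}. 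Since $P_Q$ is Poisson and these contributions are i.i.d., the retained nodes of $Q$ form a Poisson point process, and superposing over all $Q$ (they involve disjoint sets of nodes, hence are independent) gives a Poisson point process $\Phi$ with intensity
\[
   \rho(y)=(1-\epsilon/4)^2\,\beta\ell^2\sum_{Q}f_\Delta(y-z_Q)\,\mathbf{1}\{\|y-z_Q\|_2\le C\sqrt{\Delta\log\epsilon^{-1}}\}.
\]
By construction $\Phi\subseteq\Xi_\Delta$ on $\mathcal G$, and its law does not depend on the positions of the nodes of $\Xi_0$ (the same cancellation), so $\Phi$ is independent of $\Xi_0$ as required; off $\mathcal G$ one realises $\Phi$ from an independent copy so that it is always the same Poisson process.

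It remains to bound $\rho$ from below on $S'$ and to bound $\pr{\mathcal G^c}$. Choosing $K'\le K-c_2\sqrt{\Delta\log\epsilon^{-1}}$ guarantees that for every $y\in S'$ all subsquares meeting $B(y,C\sqrt{\Delta\log\epsilon^{-1}})$ lie inside $S$, so the sum in $\rho(y)$ is the full grid sum $\sum_Q f_\Delta(y-z_Q)$ minus the mass outside radius $C\sqrt{\Delta\log\epsilon^{-1}}$. The grid sum, multiplied by $\ell^2$, equals $\int f_\Delta=1$ up to a Riemann-sum error $O(\ell^2/\Delta)$ because $f_\Delta$ varies on the scale $\sqrt\Delta\gg\ell$; the tail is at most $\epsilon^{10}$ once $C$ is a large enough absolute constant. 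Hence $\rho(y)\ge(1-\epsilon)\beta$ on $S'$ for suitable constants, and by a further thinning of $\Phi$ we obtain, on $\mathcal G$, a Poisson point process of intensity exactly $(1-\epsilon)\beta$ contained in $\Xi_\Delta$ inside $S'$. Finally, by the Chernoff bound for the Poisson upper tail, $\pr{P_Q>n_Q}\le\pr{\mathrm{Poisson}((1-\epsilon/4)\beta\ell^2)\ge\beta\ell^2}\le e^{-c\epsilon^2\beta\ell^2}$, and a union bound over the $O((K/\ell)^2)$ subsquares, the polynomial factor being absorbed into the constants, gives $\pr{\mathcal G^c}\le e^{-c_3\epsilon^2\beta\ell^2}$.

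The main obstacle is structural rather than computational. Exact stochastic domination of $\mathrm{Poisson}((1-\epsilon)\beta)$ on $S'$ by $\Xi_\Delta$ is impossible, since a Poisson process can place arbitrarily many points in a region containing only finitely many nodes, so one must aim for a high-probability containment; this forces the Poissonisation to be carried out on the \emph{counts} at time $0$ (to produce a genuine Poisson process at time $\Delta$), while simultaneously the thinning must be arranged so that the retention probability is blind to where exactly each node sits inside its subsquare, because that position is arbitrary and cannot be assumed equidistributed. The cancellation of $f_\Delta(\,\cdot-x)$ in the retention rule is the device that reconciles these two requirements, and spotting it is the real content of the proof; the remaining ingredients---a Poisson Chernoff bound, a Riemann-sum estimate, and a Gaussian-tail estimate at the scale $\sqrt{\Delta\log\epsilon^{-1}}$ that dictates the boundary loss $c_2\sqrt{\Delta\log\epsilon^{-1}}$---are routine.
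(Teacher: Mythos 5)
This proposition is imported from~\cite{SS10}; the paper itself gives no proof of it, so there is nothing internal to compare against. Your strategy---Poissonise the per-cell counts and then thin each displaced node with an acceptance probability of the form $\alpha\,g(y)/f_\Delta(y-x)$, so that the unknown starting position $x$ cancels and the retained points form a genuine Poisson process whose law does not depend on the initial configuration---is the right one, and it is precisely the mirror image of the construction the paper uses in Section~\ref{sec:stochdom} via $\varphi_t(i,j)=\inf_{x\in Q_i,y\in Q_j}f_t(y-x)$. Two steps, however, do not go through as written. The first is the claim that $f_\Delta(y-x)\ge(1-\epsilon/4)f_\Delta(y-z_Q)$ uniformly over $\|y-z_Q\|_2\le C\sqrt{\Delta\log\epsilon^{-1}}$. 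The relevant exponent is $(\|y-x\|_2^2-\|y-z_Q\|_2^2)/(2\Delta)\le(\sqrt{2}\,\ell\,\|y-z_Q\|_2+\ell^2/2)/(2\Delta)$, which at the edge of the truncation ball is of order $\epsilon\sqrt{\log\epsilon^{-1}}$ when $\Delta=c_1\ell^2/\epsilon^2$. Since $c_1$ is universal and $\epsilon$ arbitrary, the ``logarithmic correction'' cannot be absorbed into $c_1$: for small $\epsilon$ your retention probability exceeds $1$, and deflating the prefactor to restore admissibility only yields intensity $(1-O(\epsilon\sqrt{\log\epsilon^{-1}}))\beta$, not $(1-\epsilon)\beta$. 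The repair is to take the target sub-density to be $\inf_{x'\in Q}f_\Delta(y-x')$ instead of $(1-\epsilon/4)f_\Delta(y-z_Q)$: the acceptance probability is then automatically at most $1$ with no truncation, the retained sub-density is still independent of $x$, and the lost mass is controlled in an averaged rather than uniform sense, namely $\sum_Q\int_Q\bigl(f_\Delta(y-x)-\inf_{x'\in Q}f_\Delta(y-x')\bigr)\,dx=O\bigl(\ell\int\|\nabla f_\Delta\|\bigr)=O(\ell/\sqrt{\Delta})=O(\epsilon/\sqrt{c_1})$, which is exactly the bound you need.

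The second gap is the final union bound: you have $O((K/\ell)^2)$ cells, each failing with probability $e^{-c\epsilon^2\beta\ell^2}$, and $(K/\ell)^2$ is a free parameter with no relation to $\beta$, $\ell$ or $\epsilon$, so it cannot be ``absorbed into the constants''; the honest conclusion of your argument is a failure probability of order $(K/\ell)^2e^{-c\epsilon^2\beta\ell^2}$. This is arguably a defect of the statement as transcribed rather than of your proof: when the proposition is applied in the proof of Theorem~\ref{thm:radius}, the failure probability actually invoked is $e^{-c_3\delta^3 t}$, which scales with the size of the domain and is not of the form $e^{-c_3\epsilon^2\beta\ell^2}$ for the cell size $\ell=2\sqrt{3}/3$ used there, so some dependence on the number of cells does survive in any correct version. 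A last, easily repaired, point: defining $\Phi$ as the retained points on $\mathcal{G}$ and an independent copy on $\mathcal{G}^{\,\compl}$ does not produce an exactly Poisson process, because the retained configuration is not independent of $\mathcal{G}$; the standard remedy is to complete each deficient cell with $P_Q-n_Q$ virtual nodes started inside $Q$, so that the thinned process is unconditionally Poisson and coincides with a subset of $\Xi_\Delta$ on $\mathcal{G}$. With these repairs your construction does prove the proposition, in the form carrying the $(K/\ell)^2$ prefactor, by essentially the intended argument.
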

   Now we show how to apply this result to our setting. Let $i\in I$ be fixed. 
   Take $S$ to be the union of the hexagons in $J_i$; clearly $S$ is not a square but that is not important. Instead of tessellating $S$ into 
   squares, we tessellate $S$ into hexagons of side length $\sqrt{3}/3$. Each such hexagon contains one node of $\Xi_0=\Pi_0\cap (\cup_{j\in J_i} Q_j)$,
   which gives $\beta=2/\sqrt{3}$, the density of nodes of $\Xi_0$. 
   The main step in adapting the proof to our setting is to note that $\ell$ can represent the diameter of the cells of the tessellation, which in our case gives $\ell=2\sqrt{3}/3$, 
   and $K-K'$ is the minimum distance between $S'$ and a point outside $S$. Since we take $S'=Q_i$ we have that $K-K' \geq C\delta\sqrt{t}-2\delta\sqrt{t}$, where $2\delta\sqrt{t}$ is the 
   diameter of $Q_i$. Now setting $\epsilon = \sqrt{\delta}$ and $\Delta=t$, we obtain a $t_0$ so that, for all $t\geq t_0$, the conditions on $\Delta$ and $K-K'$ in Proposition~\ref{pro:ss10} are 
   satisfied. Therefore, with probability at least $1-e^{-c_3 \delta^3 t}$, for some positive constant $c_3$, the nodes of $\Pi_t$ that are inside $Q_i$ at time $t$ 
   and were inside $\cup_{j\in J_i} Q_j$ at time $0$ stochastically dominate a Poisson
   point process $\Phi$ of intensity $(1-\sqrt{\delta})\frac{2}{\sqrt{3}}$. 
   When this happens, we let $Y_i=1$, where $Y_i$ here is analogous to the one in~\eqref{eq:yi}. 
   
   Then we proceed similarly as in the proof of Theorem~\ref{thm:percolation}. Define $\widetilde Y_i$ to be $1$ if $Y_j=1$ for all $j\in N_i$ (recall that $N_i$ is the set of hexagons that intersect $Q_i$) and 
   the largest component of the region $R(\Phi)\cap (\cup_{j\in N_i}Q_j)$, which we denote by $X$, is such that $\cup_{j\in N_i}Q_j\setminus X$ contains only components of diameter smaller than $\delta\sqrt{t}/10$. 
   This means that if there exists a path $\widetilde Y_1,\widetilde Y_2,\ldots$ such that, for all $j=1,2,\ldots,$ we have $\widetilde Y_j=1$ and $Q_j$ and $Q_{j+1}$ intersect, then the region 
   $R(\Phi)\cap (Q_1\cup Q_2 \cup \cdots)$ has a connected component that intersects each hexagon $Q_j$ in the path. 
   Then, for $t$ sufficiently large, we can apply~\cite[Theorem~2]{PenPis} 
   and~\cite[Theorem~1.3]{LSS97} as before to show that the hexagons with $\widetilde Y_i=1$ percolate and, consequently, 
   adding balls of radius $r>r_\critical^{2/\sqrt{3} - \sqrt{\delta}}$ centered at 
   the nodes of $\Phi$ produces an infinite component. By the scaling argument in~\eqref{eq:rscaling} we 
   have that $r_\critical^{2/\sqrt{3}-\sqrt{\delta}}=r_\critical^{2/\sqrt{3}}\sqrt{\frac{\lambda}{\lambda-\sqrt{\delta}}}$, which finally yields
   $$
      \limsup_{t\to\infty} r_\critical(t) 
      \leq r_\critical^{2/\sqrt{3}-\sqrt{\delta}}
      = r_\critical^{2/\sqrt{3}}\sqrt{\frac{\lambda}{\lambda-\sqrt{\delta}}}.
   $$
   Since $\delta$ can be arbitrarily close to $0$, we obtain $\limsup_{t\to\infty}r_\critical(t)\leq r_\critical^{2/\sqrt{3}}$, which together with~\eqref{eq:liminf} concludes the proof of Theorem~\ref{thm:radius}.
\end{proof}

\section{Short time}\label{sec:smalltime}
Now we turn our attention to the case when $t$ is sufficiently small. 
We establish that, given a Monte Carlo estimate, $R(\Pi_t)$ contains an infinite component almost surely for sufficiently small $t$.

Consider a tessellation of $\mathbb{R}^2$ into regular hexagons of side length $50$. 
We will denote this tessellation by $\mathcal{H}_{50}$. Instead of 
considering the usual tessellation, where each hexagon is obtained by the union of some triangles of $\mathcal{T}$, we will shift the hexagonal tessellation
(see the illustration in Figure~\ref{fig:hextess}) 
so that no node of $\Pi_0$ is on an edge or vertex of $\mathcal{H}_{50}$, and the edges of $\mathcal{H}_{50}$ intersect as many of the balls centered at $\Pi_0$ as possible.
More formally,
since a transitive lattice can be specified by a single edge, we define $\mathcal{T}$ as the triangular lattice 
containing an edge between the points $(0,0)$ and $(1,0)$, and for any $\ell>0$, 
we let $\mathcal{H}_{\ell}$ be the hexagonal lattice containing an edge between
$(1/2,-\sqrt{3}/4)$ and $(\ell+1/2,-\sqrt{3}/4)$.

Let $H_1$ and $H_2$ be two hexagons of $\mathcal{H}_{50}$ that have one edge in common, and denote this edge by $e$. 
Starting from $e$, denote the other edges of $H_1$ in clockwise direction by $e_1,e_2,e_3,e_4,e_5$; thus $e_3$ is the edge of $H_1$ opposite to $e$. 
Similarly, denote the other edges of 
$H_2$ in clockwise direction by $e'_1,e'_2,e'_3,e'_4,e'_5$ (refer to Figure~\ref{fig:hextess}).
Given any three sets $X_1,X_2,X_3\subset \mathbb{R}^2$ and any $t>0$, 
we say that $R(\Pi_t)$ has a path from $X_1$ to $X_2$ inside $X_3$ if there exists a sequence of 
nodes $u_1,u_2,\ldots,u_k$ of $\Pi_t$, all of which inside $X_3$, such that $B(u_1,1/2)$ intersects $X_1$, $B(u_k,1/2)$ intersects $X_2$, 
and for each $i\geq 1$, 
the distance between $u_i$ and $u_{i+1}$ is at most 1.
With this, we say that $R(\Pi_t)$ \emph{crosses} $H_1$ and $H_2$ if the following three conditions hold:
\begin{enumerate}
   \item $R(\Pi_t)$ has a path from $e_3$ to $e_3'$ inside $H_1 \cup H_2$.
   \item $R(\Pi_t)$ has a path from $e_1\cup e_2$ to $e_4\cup e_5$ inside $H_1 \cup H_2$.
   \item $R(\Pi_t)$ has a path from $e_1'\cup e_2'$ to $e_4'\cup e_5'$ inside $H_1 \cup H_2$.
\end{enumerate}
We denote by $A_t$ the event that $R(\Pi_t)$ crosses $H_1$ and $H_2$ with a path that also crosses $H_1$ and $H_2$ at time $0$; this last condition is used to obtain a 
type of monotonicity later.
Then we have the following result.
\begin{theorem}\label{thm:smalltime}
   Suppose that there exists an $\epsilon_0>0$ such that $\pr{A_{\epsilon_0}}>0.8639$. Then, for all $\epsilon \in [0,\epsilon_0]$, it holds that 
   $R(\Pi_\epsilon)$ contains an infinite connected component almost surely.
\end{theorem}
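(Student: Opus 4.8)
The plan is to reduce percolation of $R(\Pi_\epsilon)$ to a $1$-dependent site percolation on the hexagonal tessellation $\mathcal{H}_{50}$, where a site is ``open'' when the two-hexagon crossing event $A_t$ occurs (appropriately localized). First I would fix $\epsilon_0$ as in the hypothesis and, for each ordered adjacent pair $(H_1,H_2)$ of hexagons in $\mathcal{H}_{50}$, let $A_t(H_1,H_2)$ be the translate of the event $A_t$ to that pair. The key combinatorial point is that if a collection of hexagons is such that every adjacent pair within it is ``crossed'' in the sense of conditions (1)--(3), then the crossing paths can be concatenated: the path from $e_3$ to $e_3'$ in one pair links up (through the side edges $e_1\cup e_2$, $e_4\cup e_5$, handled by conditions (2) and (3)) with the corresponding paths of neighboring pairs, producing an unbounded connected component of $R(\Pi_\epsilon)$ whenever the open pairs percolate. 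This is the standard ``block argument'' and I would state it as a deterministic lemma about the geometry of $\mathcal{H}_{50}$ first.

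Next I would address independence. The event $A_t(H_1,H_2)$ depends only on the trajectories $\{(\zeta_u(s))_{s\le t} : u\in\Pi_0,\ u \text{ starts within distance } 1/2 \text{ of } H_1\cup H_2\}$ together with — crucially — the constraint that the \emph{entire motion} stays relevant, but here is the subtlety: a node starting far from $H_1\cup H_2$ could in principle wander in and contribute. This is precisely why $A_t$ is defined to require the crossing path to cross $H_1,H_2$ \emph{at time $0$ as well}: with that restriction, only nodes whose time-$0$ position lies within distance $1/2$ of $H_1\cup H_2$ can participate, so $A_t(H_1,H_2)$ is measurable with respect to the Brownian motions of a bounded set of nodes. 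Two pairs of hexagons that are ``far apart'' (no hexagon in common, in fact at graph-distance $\ge 2$ in the adjacency graph of pairs) then depend on disjoint sets of nodes, hence the field $(\mathbf{1}\{A_t(H_1,H_2)=1\})$ is a $k$-dependent field for some absolute constant $k$ determined by the geometry.

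Then I would invoke monotonicity in $t$. The reason $A_t$ includes the time-$0$ crossing condition is to ensure $\pr{A_t}$ is (at least approximately, or after the right setup) nonincreasing — or rather, one shows directly that $\pr{A_\epsilon}\ge \pr{A_{\epsilon_0}}$ for $\epsilon\le\epsilon_0$, since a path crossing at both time $0$ and time $\epsilon_0$ is a more restrictive requirement than at time $0$ and time $\epsilon$ in the sense needed; more carefully, one couples the Brownian motions up to time $\epsilon$ and extends to $\epsilon_0$, and a crossing surviving to $\epsilon_0$ is used to witness a crossing at the intermediate time $\epsilon$ via the time-$0$ anchoring. Granting $\pr{A_\epsilon}>0.8639$ for all $\epsilon\le\epsilon_0$, I apply the Liggett--Schonmann--Stacey theorem \cite[Theorem~1.3]{LSS97}: a $k$-dependent field with marginals exceeding a threshold depending only on $k$ stochastically dominates a supercritical Bernoulli site percolation. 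The numerical value $0.8639$ is chosen so that this domination threshold is met for the relevant $k$ (and comfortably above the critical probability $1/2$ for site percolation on the triangular lattice, which is the adjacency structure of $\mathcal{H}_{50}$). The dominated Bernoulli field percolates, so the open pairs percolate, so by the deterministic block lemma $R(\Pi_\epsilon)$ has an infinite component a.s.

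The main obstacle I anticipate is making the ``$A_t$ depends only on nearby nodes'' claim airtight: one must verify that the time-$0$ crossing requirement genuinely confines all participating nodes to a bounded neighborhood of $H_1\cup H_2$ \emph{and} that this does not destroy the path-concatenation property across adjacent pairs (the crossing paths at time $0$ in overlapping regions must be consistent). A secondary obstacle is the monotonicity-in-$t$ step: the anchoring at time $0$ is what makes it plausible, but the coupling argument showing $\pr{A_\epsilon}\ge\pr{A_{\epsilon_0}}$ needs to be spelled out, as it is not a generic FKG-type statement. Everything else — the block argument, the application of \cite{LSS97}, and checking the constant — is routine once these two points are settled.
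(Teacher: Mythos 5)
Your overall strategy --- renormalize to crossing events of adjacent pairs of hexagons of $\mathcal{H}_{50}$, use the time-$0$ anchoring both to confine each crossing event to a bounded set of nodes (so that the events attached to disjoint pairs are exactly independent and the process is $1$-dependent) and to get monotonicity in $t$, then compare with an independent percolation process --- is essentially the paper's. The monotonicity step you leave vague is carried out in the paper via Lemma~\ref{lem:monotone}: couple the motions by Brownian scaling, $\zeta_u(\epsilon)=\sqrt{\epsilon/\epsilon_0}\,\zeta'_u(\epsilon_0)$, and observe that the intermediate configuration is a convex combination of the time-$0$ and time-$\epsilon_0$ configurations, so any intersection present at both endpoints persists (if $\|x_1\|_2\leq 1$ and $\|x_1+x_2\|_2\leq 1$ then $\|x_1+\delta x_2\|_2\leq 1$ for $\delta\in(0,1)$); the time-$0$ crossing requirement is precisely what makes this convexity argument available.

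The genuine gap is in your final comparison step. You invoke Liggett--Schonmann--Stacey~\cite[Theorem~1.3]{LSS97} and assert that the constant $0.8639$ ``is chosen so that this domination threshold is met for the relevant $k$.'' It is not. The LSS bounds for a $k$-dependent field with marginals $0.8639$ and the dependence degree arising here produce a dominated product density well below $1/2$ (already for $1$-dependence on $\mathbb{Z}$ the sharp LSS-type bound is $\bigl(1-\sqrt{1-p}\bigr)^2\approx 0.40$ at $p=0.8639$), so you cannot get above the critical probability of any relevant lattice this way. The constant $0.8639$ is the threshold of a different and sharper result: Balister--Bollob\'as--Walters~\cite[Theorem~2]{BBW05}, which states that any $1$-dependent \emph{bond} percolation process on the square lattice whose edges are open with probability greater than $0.8639$ percolates almost surely. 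The paper accordingly treats the crossing events as a bond process indexed by the edges of the triangular lattice of hexagon centers (which contains the square lattice as a subgraph) and applies that theorem directly. Substituting LSS would force a much larger constant into the hypothesis, defeating the Monte Carlo verification that motivates the statement; with the theorem as written, your argument does not close.
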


\begin{figure}[tbp]
   \begin{center}
       \includegraphics[scale =.7]{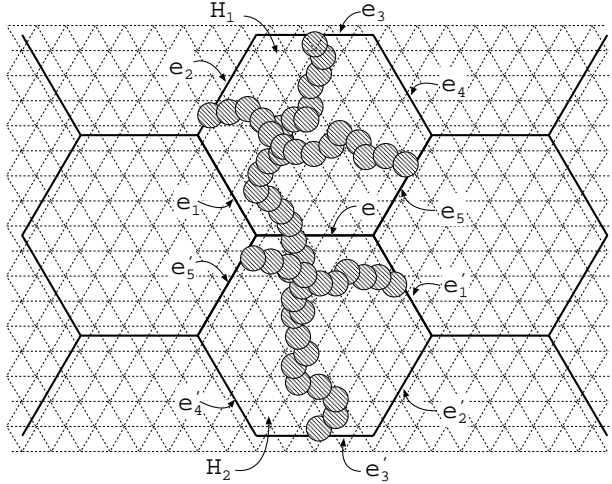}
   \end{center}
   \caption{The hexagonal tessellation $\mathcal{H}_4$ of $\mathbb{R}^2$ with hexagons of side length $4$, and the illustration of a path of intersecting balls crossing $H_1$ and $H_2$.}
   \label{fig:hextess}
\end{figure}

Note that, for any fixed $t$, verifying the condition $\pr{A_t}>0.8639$ resorts to solving a finite, but high dimensional integral describing the crossing probability. 
We were able to check the validity of this condition for $t=0.01$ via a Monte Carlo analysis\footnote{To obtain this Monte Carlo estimate we employed the Mersenne Twister pseudorandom number generator 
by Matsumoto and Nishimura~\cite{pseudo} with period $2^{19937}-1$ and improved initialization scheme from January $26^\text{th}$, 2002.} with confidence $99.99\%$.

We start the proof with the lemma below, which  establishes a type of monotonicity that will be useful later. 
To state the lemma, let $V$ be a set of points in $\mathbb{R}^d$ and define $E(V)$ as the set of pairs of points of $V$ whose distance is at most $1$. 
Note that the pair $(V,E(V))$ induces a graph over $V$.
\begin{lemma}\label{lem:monotone}
   Let $V\subseteq \Pi_0$ such that the graph $(V,E(V))$ is connected.
   Let $V_s$ be obtained by letting the nodes of $V$ move for time $s\geq 0$ according to independent Brownian motions. Then, 
   we have that $\pr{E(V) \subseteq E(V_s)}$ is non-increasing with $s$.
\end{lemma}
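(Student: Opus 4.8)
The plan is to reduce the claim to a one-line fact about Gaussian measures of convex sets. Let $(\beta_u)_{u\in V}$ be the independent planar Brownian motions driving the motion, so $V_s=\{u+\beta_u(s):u\in V\}$, and identify each node with its moved copy, so that $E(V)\subseteq E(V_s)$ means exactly that every pair $\{u,v\}$ with $\|u-v\|_2\le 1$ still satisfies $\|(u+\beta_u(s))-(v+\beta_v(s))\|_2\le 1$. For each $e=\{u,v\}\in E(V)$, with a fixed orientation, set $D_e(s)=(u-v)+(\beta_u(s)-\beta_v(s))\in\mathbb{R}^2$ and collect these into $D(s)=(D_e(s))_{e\in E(V)}\in(\mathbb{R}^2)^{E(V)}$. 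Then $\{E(V)\subseteq E(V_s)\}=\{D(s)\in K\}$, where $K=\{(\xi_e)_e:\|\xi_e\|_2\le 1\ \text{for all }e\}$ is a product of closed unit disks, hence a closed \emph{convex} set. Moreover $D(s)$ is Gaussian with mean $m:=(u-v)_{e=\{u,v\}}$, and since each $e\in E(V)$ is an edge we have $\|m_e\|_2\le 1$, i.e.\ $m\in K$. I note that connectivity of $(V,E(V))$ plays no role in the argument; it is simply the setting in which the lemma is applied (and, if $V$ is infinite, one first restricts to finite subgraphs).

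Next, by Brownian scaling, $D(s)\overset{d}{=}m+\sqrt{s}\,\Gamma$ for every $s\ge 0$, where $\Gamma$ has the law of $(\beta_u(1)-\beta_v(1))_{e\in E(V)}$, a centered Gaussian vector whose law does not depend on $s$ (its covariance, dictated by the incidence structure of the graph, may be degenerate, which is harmless). Hence
$$
   \pr{E(V)\subseteq E(V_s)}=\pr{m+\sqrt{s}\,\Gamma\in K}=\pr{\Gamma\in s^{-1/2}(K-m)},
$$
so it suffices to show that the sets $r^{-1}(K-m)$, $r>0$, decrease for inclusion as $r$ grows, i.e.\ $r_2^{-1}(K-m)\subseteq r_1^{-1}(K-m)$ whenever $0<r_1<r_2$.

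This is the one substantive point, and it is exactly where convexity of $K$ and $m\in K$ enter. If $v\in r_2^{-1}(K-m)$, i.e.\ $r_2v+m\in K$, then
$$
   r_1v+m=\tfrac{r_1}{r_2}\,(r_2v+m)+\bigl(1-\tfrac{r_1}{r_2}\bigr)m
$$
is a convex combination of $r_2v+m\in K$ and $m\in K$, hence lies in $K$; that is, $v\in r_1^{-1}(K-m)$. Therefore $r\mapsto\pr{\Gamma\in r^{-1}(K-m)}$ is nonincreasing, and substituting $r=\sqrt{s}$ gives that $\pr{E(V)\subseteq E(V_s)}$ is nonincreasing in $s$ (the value at $s=0$ being $1$, which is consistent since $m\in K$).

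I do not anticipate a genuine obstacle once the reduction is in place. The point worth stressing is why a naive argument fails: conditioning on $V_{s'}$ for $s'<s$ via the Markov property does not close, because an edge broken by time $s'$ can re-form by time $s$; the content of the proof is precisely that passing to the convex target $K$ with the initial difference vector $m$ lying in $K$ rules this out in the aggregate, via the elementary nested‑sets observation above. The only care needed is in setting up $K$ as a (product of) convex set(s) and checking $m\in K$, both of which are immediate.
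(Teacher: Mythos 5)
Your proof is correct and is essentially the paper's argument: both rest on the Brownian scaling coupling $\zeta_u(s)=\sqrt{s/s'}\,\zeta'_u(s')$ together with the convexity of the unit disk (the paper's ``standard geometric argument'' that $\|x_1\|_2\le 1$ and $\|x_1+x_2\|_2\le 1$ imply $\|x_1+\delta x_2\|_2\le 1$ is exactly your convex-combination step with $\delta=\sqrt{s/s'}$). Your packaging of the event as $\{D(s)\in K\}$ for a product of disks containing the mean, and your remark that connectivity of $(V,E(V))$ is not actually used, are harmless reformulations rather than a different route.
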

\begin{proof}
   This follows by Brownian scale. 
   Consider $s'>s$, and let $V'_{s'}$ be obtained by letting the nodes of $V$ move according 
   to independent Brownian motions for time $s'$.
   Now we create a coupling between $E(V'_{s'})$ and $E(V_s)$ so that,
   if $E(V) \subseteq E(V'_{s'})$, then $E(V)\subseteq E(V_s)$. 
   
   For each $u\in V$, let $\zeta_u(s)$ and $\zeta'_u(s')$ be the Brownian motions for the motion of $u$ in $V_s$ and $V_{s'}'$, respectively.
   Then,
   $$
      \pr{E(V) \subseteq E(V_s)}
      = \pr{\bigcap_{(u,v)\in E(V)}\Big\{\|u+\zeta_{u}(s)-v-\zeta_{v}(s)\|_2\leq 1\Big\}}.
   $$
   Now, by Brownian scale, we can couple $\zeta_u(s)$ and $\zeta'_{u}(s')$ via $\zeta_u(s) = \sqrt{s/s'}\zeta'_u(s')$. Using this, we write the 
   right-hand side above as 
   $$
      \pr{\bigcap_{(u,v)\in E(V)}\Big\{\|u-v+\sqrt{s/s'}(\zeta'_u(s')-\zeta'_v(s'))\|_2\leq 1\Big\}}.
   $$
   Now define the vectors $x_1=u-v$ and $x_2=\zeta'_{u}(s')-\zeta'_{v}(s')$.
   For any $\delta\in(0,1)$, it follows by standard geometric arguments that, if $\|x_1\|_2$ and $\|x_1+x_2\|_2$ are at most $1$, then
   $$
      \|x_1 + \delta x_2 \|_2 \leq 1,
   $$
   which establishes that 
   \begin{eqnarray*}
      \lefteqn{\pr{\bigcap_{(u,v)\in E(V)}\Big\{\|u-v+\sqrt{s/s'}(\zeta'_u(s')-\zeta'_v(s'))\|_2\leq 1\Big\}}}\\
      &\geq& \pr{\bigcap_{(u,v)\in E(V)}\Big\{\|(u-v)+(\zeta'_{u}(s')-\zeta'_{v}(s'))\|_2\leq 1\Big\}}= \pr{E(V) \subseteq E(V'_{s'})},
   \end{eqnarray*}
   which completes the proof.   
\end{proof}

Now we proceed to the proof of Theorem~\ref{thm:smalltime}.
\begin{proof}[{\bf Proof of Theorem~\ref{thm:smalltime}}]
   Note that, due to Lemma~\ref{lem:monotone}, proving Theorem~\ref{thm:smalltime} reduces to showing that, if there exists an $\epsilon>0$ such that 
   $\pr{A_\epsilon}>0.8639$, then $R(\Pi_\epsilon)$ contains an infinite connected component almost surely. We henceforth fix a value of 
   $\epsilon$ and assume that $\pr{A_\epsilon}>0.8639$.   

   We will use a renormalization argument. 
   Consider the hexagons $\mathcal{H}_{50}$ described in the beginning of this section.
   Now, define the graph $L=(U,F)$ such that 
   $U$ is the set of points given by the centers of the hexagons
   and $F$ is the set of edges between every pair of points $i,j\in U$ for which the hexagons with centers at $i$ and $j$ share an edge. 
   Note that $L$ consists of a scaling of the triangular lattice. 
   
   We now define a collection of random variables $X_i$ for each edge $i\in F$.
   In order to explain the process defining $X_i$, let $H_1$ and $H_2$ be the hexagons whose centers are the endpoints of $i$. 
   We then define $X_i=1$ if and only if $R(\Pi_\epsilon)$ crosses $H_1$ and $H_2$ with a path of balls that also crosses $H_1$ and $H_2$ at time $0$.
   (The definition of crossings is given right before the statement of Theorem~\ref{thm:smalltime}.)
   Let $j$ be an edge such that $i$ and $j$ are disjoint, and let $H_3$ and $H_4$ denote the hexagons centered at the endpoints of $j$.
   Clearly, $X_i$ and $X_j$ are independent since the set of balls crossing $H_1$ and $H_2$ at time $0$ 
   does not intersect the set of balls crossing $H_3$ and $H_4$ at time $0$. 
   Thus, the collection $(X_i)_i$ is a so-called 1-dependent bond percolation process, with $\pr{X_i=1}=\pr{A_\epsilon}>0.8639$.
   Then, we can use a result of Balister, Bollob\'as and Walters~\cite[Theorem~2]{BBW05}, which gives that any 1-dependent bond percolation process on the square lattice with marginal probability
   larger than 0.8639 percolates almost surely. Since the triangular lattice contains the square lattice, we obtain that, almost surely,
   there exists an infinite path of consecutive edges of $F$ with $X_i=1$ for all $i$ in the path.
   
   To conclude the proof, note that, 
   for two non-disjoint edges $i$ and $j$ with $X_i,X_j=1$, we have that the crossings of the hexagons whose centers are located at the endpoints of $i$ and $j$
   intersect. Then, the infinite path of $X_i$ with $X_i=1$ for all $i$ in the path contains an infinite path inside $R(\Pi_\epsilon)$, which concludes the proof of Theorem~\ref{thm:smalltime}.
\end{proof}

\section{Extensions and Open Problems}\label{sec:open}
In the remaining of this section we discuss extensions and open problems regarding other circle packings (Section~\ref{sec:packings}), 
balls moving over graphs (Section~\ref{sec:graphs}) and critical radius for non-mobile point processes (Section~\ref{sec:radius})

\subsection{Other circle packings}\label{sec:packings}
Let $\Pi_0^\mathrm{s}$ be 
the point process given by the vertices of the square lattice with side length $1$, and let $\Pi_t^\mathrm{s}$ be the point process obtained by letting 
the nodes of $\Pi_0^\mathrm{s}$ move for time $t$ according to independent Brownian motions. 
Note that, for any $\epsilon>0$, if we look at two balls of radius $1/2$ centered 
at two adjacent nodes of $\Pi_0^\mathrm{s}$, then at time $\epsilon$, the probability that these two balls intersect is strictly smaller than $1/2$, which is the 
critical probability for edge percolation on the square lattice~\cite{Grimmett}. This motivates our next conjecture.
\begin{conjecture}
   For any $\epsilon>0$, it holds that, almost surely, all components of $R(\Pi_\epsilon^\mathrm{s})$ are finite.
\end{conjecture}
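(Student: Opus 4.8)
The plan is to show that the vacant region $\mathbb{R}^2\setminus R(\Pi_\epsilon^\mathrm{s})$ contains an infinite connected component encircling every fixed point, which forces every component of $R(\Pi_\epsilon^\mathrm{s})$ to be finite. For $\epsilon$ large this is essentially already covered: the density of $\Pi_0^\mathrm{s}$ equals $1$, so the square-lattice analogue of Theorem~\ref{thm:percolation} --- obtained by rerunning the construction of Sections~\ref{sec:stochdom}--\ref{sec:largetime} with $\Pi_0^\mathrm{s}$ in place of $\Pi_0$ --- gives the conclusion for all $\epsilon$ above some $\epsilon_1$, provided one knows $\lambda_\critical>1$. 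The real content is small and moderate $\epsilon$, and here I would run a renormalization argument dual to the one behind Theorem~\ref{thm:smalltime}.

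Fix $\epsilon>0$ and a large constant $L$, tessellate $\mathbb{R}^2$ into squares $(B_k)$ of side $L$ forming a renormalized copy of $\mathbb{Z}^2$, and call $B_k$ \emph{good} if (i) no node of $\Pi_0^\mathrm{s}$ started within distance $3L$ of $B_k$ moves by more than $L/100$ during $[0,\epsilon]$, and (ii) in the $3L\times 3L$ square concentric with $B_k$ the set $R(\Pi_\epsilon^\mathrm{s})$ has no left--right crossing and no top--bottom crossing --- equivalently, by planar duality for the Boolean model, the vacant set there contains a circuit around $B_k$. Condition~(i) makes $\ind{B_k\text{ good}}$ measurable with respect to the Brownian motions of the finitely many nodes started near $B_k$, so $(\ind{B_k\text{ good}})_k$ is a finite-range-dependent field; combining~\cite{LSS97} with a Peierls estimate then shows that, \emph{provided} $\pr{B_k\text{ good}}$ exceeds a threshold close to $1$, the good squares percolate and we are done. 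The nodes excluded by~(i) must be handled separately: exactly as in the computation in the proof of Lemma~\ref{lem:problematic}, their time-$\epsilon$ positions form a point process stochastically dominated by a Poisson process of intensity $\eta=e^{-\Theta(L^2/\epsilon)}$, whose balls one absorbs into the ``bad'' part of the configuration, checking by a sprinkling argument that such a sparse family cannot, on its own, bridge the vacant circuits of~(ii).

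The main obstacle is the quantitative estimate $\pr{B_k\text{ good}}\to 1$ as $L\to\infty$. For two nodes at adjacent sites of $\Pi_0^\mathrm{s}$ the probability their radius-$1/2$ balls still meet at time $\epsilon$ is $\pr{\|(1,0)+W\|_2\le 1}$ with $W\sim N(0,2\epsilon I_2)$, which equals $\tfrac12-\Theta(\sqrt\epsilon)$ --- only \emph{barely} below $p_\critical^{\mathrm{bond}}(\mathbb{Z}^2)=\tfrac12$. Worse, the events that the two balls at the endpoints of an edge $e$ of $\mathbb{Z}^2$ meet are not independent: edges sharing an endpoint are correlated through the common Brownian motion, so the natural object is a $1$-dependent bond process on $\mathbb{Z}^2$ with marginal slightly below $\tfrac12$, for which the equality $p_\critical=\tfrac12$ no longer applies. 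A direct Peierls bound on open crossings is useless here, since a crossing of length $n$ yields only $\ge n/2$ pairwise vertex-disjoint (hence independent) edges and therefore a bound of order $(\kappa\sqrt{p})^n$ with $\kappa$ the connective-constant factor of $\mathbb{Z}^2$, which does not decay when $p\approx\tfrac12$; and driving $p$ well below $\tfrac12$ by a preliminary coarse-graining step is circular, as that requires subcriticality at a larger scale to begin with. I therefore expect that closing this gap needs either a sharp-threshold / RSW-type input tailored to this $1$-dependent Boolean-type model --- so that being $\Theta(\sqrt\epsilon)$ below criticality suffices, as it does in the i.i.d.\ case --- or a monotone coupling of the edge field with an i.i.d.\ Bernoulli field of parameter still below $\tfrac12$; the absence of such a tool in the near-critical regime is precisely what leaves the statement at the level of a conjecture.
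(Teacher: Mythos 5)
This statement is an open conjecture in the paper: no proof is given, only the heuristic observation that the probability that two balls centered at adjacent nodes of the unit square lattice still intersect at time $\epsilon$ drops strictly below $1/2=p_\critical^{\mathrm{bond}}(\mathbb{Z}^2)$. So there is no ``paper proof'' to compare against, and your proposal should be judged on whether it closes the gap on its own. It does not --- and, to your credit, you say so explicitly. Your diagnosis of the obstruction is accurate and matches why the authors left this as a conjecture: the natural renormalized edge field has marginal only $\tfrac12-\Theta(\sqrt{\epsilon})$ and is $1$-dependent through shared endpoints, so neither the exact criticality $p_\critical=\tfrac12$ for i.i.d.\ bond percolation nor a Liggett--Schonmann--Stacey comparison (which needs marginals far from the relevant threshold) applies, and a Peierls count of open crossings gives a bound that fails to decay at $p\approx\tfrac12$. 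Your observation that large $\epsilon$ would follow from a square-lattice analogue of Theorem~\ref{thm:percolation} (conditional on $\lambda_\critical>1$, i.e.\ on the Monte Carlo estimate) is also sound but does not touch the hard regime of small $\epsilon$, which is the whole content of the conjecture.

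The concrete gap, then, is exactly the one you name in your last paragraph: you need either a sharp-threshold/RSW-type statement for this $1$-dependent, rotationally non-symmetric Boolean-type crossing process showing that being $\Theta(\sqrt{\epsilon})$ below criticality already forces exponential decay of crossings at some finite scale $L(\epsilon)$, or a monotone coupling domination of the dependent edge field by an i.i.d.\ field with parameter still below $\tfrac12$. Neither is supplied, and without one the renormalization scheme you set up (good boxes, vacant circuits, LSS plus Peierls at the coarse scale) cannot be initialized. As it stands your submission is a well-reasoned research program and an honest account of why the statement remains conjectural, not a proof.
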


Now we consider the question of whether percolation is a \emph{monotone} property.
We say that a point process $\Pi_0$ is transitive if, for every two nodes $v,v'\in \Pi_0$, there 
exists an isometry $f\colon \Pi_0\to\Pi_0$ such that $f(v)=v'$. The open problem below concerns the question
of whether transitivity is enough to obtain monotonicity in the percolation properties of balls moving as Brownian motion.
\begin{question}\label{con:monotone}
   Let $\Pi_0$ be a transitive point process so that $R(\Pi_0)$ is a connected set. 
   Let $\Pi_t$ be obtained from $\Pi_0$ by letting the nodes move as independent Brownian motions for time $t$.
   Then, if for some time $t_0$ we have that $R(\Pi_{t_0})$ has an infinite component almost surely, then, is it true that, for any $t<t_0$, 
   $R(\Pi_t)$ also has an infinite component almost surely? 
   Similarly, if for some $t_1$ we have that $R(\Pi_{t_1})$ contains only finite components almost surely, 
   then, does it hold that, for any $t>t_1$, $R(\Pi_t)$ also contains only finite components almost surely?
\end{question}
\begin{remark}
\rm{We note that Question~\ref{con:monotone} above is false if we drop the condition that $\Pi_0$ is transitive. 
For example, consider a tessellation of $\mathbb{R}^2$ into squares of side length $6$ and, in each square of the tessellation, consider the 
configuration of balls illustrated in Figure~\ref{fig:hexmonotone}, where each ball has radius $1/2$, solid balls represent the superposition of 
14 balls and white balls represent a single ball. It is easy to see that, at 
a sufficiently small time $\epsilon$, the union of the balls will not contain an infinite component almost surely. However, the density of 
balls is equal to $\frac{9\times 14 + 18}{36}=4$ and, as the balls move for a sufficiently large amount of time, their position will approach a Poisson point process which is 
known to percolate.
}
\end{remark}
\begin{figure}[tbp]
   \begin{center}
      \includegraphics[scale =.6]{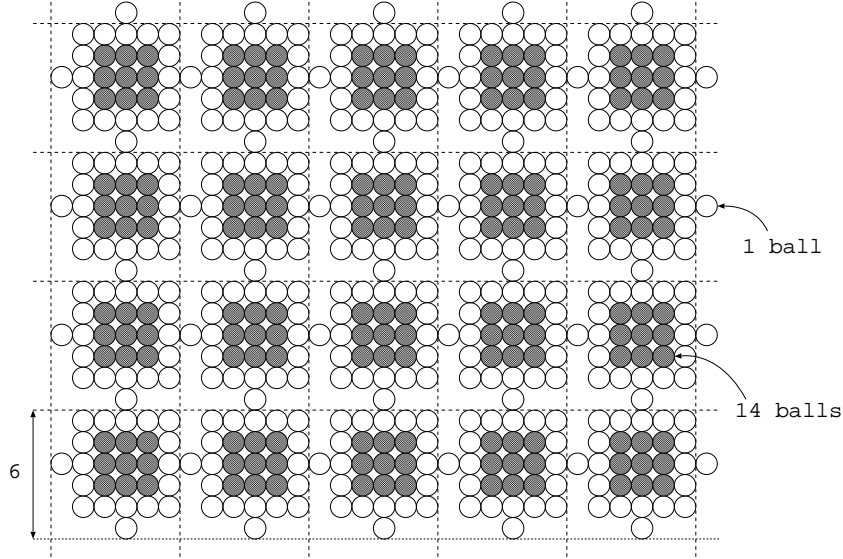}
   \end{center}
   \caption{An example of a non-transitive configuration of balls that is not monotone. Each solid ball represents the superposition of $14$ balls and 
   white balls represent single balls.}
   \label{fig:hexmonotone}
\end{figure}

\subsection{Motion over graphs}\label{sec:graphs}
We now consider the case when the motion of the nodes is more restricted. First, let $\Pi_0$ be the point process given by the integer points 
of $\mathbb{R}$. For any node $u\in\Pi_0$, we let $u+\zeta_u(t)$ be its position at time $t$, where $(\zeta_u(t))_t$ is a one-dimensional Brownian motion.
Now, consider a sequence of $m$ distinct nodes 
$u_1,u_2,\ldots,u_m$ such that $B(u_i,1/2)$ and $B(u_{i+1},1/2)$ intersect for all $i$. We call such a sequence of nodes as a \emph{path}.
Let $\epsilon$ be a sufficiently small positive constant, and 
consider only the nodes of $\Pi_0$ whose displacement from time $0$ to time $\epsilon$ is smaller than $1/2$; we denote these nodes as \emph{good} nodes. We claim that 
\begin{equation}
   \prcond{\text{$u_1,u_2,\ldots,u_m$ form a path at time $\epsilon$}}{\text{$u_i$ is good for all $i$}} = \frac{1}{m!}.
   \label{eq:observ}
\end{equation}
In order to see this, suppose, without loss of generality, that $u_1<u_2<\cdots<u_m$. 
For each node $u\in \Pi_0$, let $\zeta_u'(\epsilon)$ be the displacement of $u$ from time $0$ to $\epsilon$ given that $u$ is a good node.
Then, in order for $B(u_1+\zeta_{u_1}(\epsilon),1/2)$ to intersect 
$B(u_2+\zeta_{u_2}(\epsilon),1/2)$ we need that $|u_1+\zeta'_{u_1}(\epsilon)-u_2-\zeta'_{u_2}(\epsilon)|\leq 1$. Since $u_1$ and $u_2$ are good nodes, 
this condition translates to $u_2+\zeta'_{u_2}(\epsilon)-u_1-\zeta'_{u_1}(\epsilon)\leq 1$, which in turn implies that 
$\zeta'_{u_1}(\epsilon)\geq\zeta'_{u_2}(\epsilon)$. Repeating this argument, we obtain the condition
$\zeta'_{u_1}(\epsilon)\geq\zeta'_{u_2}(\epsilon)\geq\zeta'_{u_3}(\epsilon)\geq\cdots\geq\zeta'_{u_m}(\epsilon)$. Since the $\zeta'$ are independent and identically 
distributed, we have that $\pr{\zeta'_{u_1}(\epsilon)\geq\zeta'_{u_2}(\epsilon)\geq\cdots\geq\zeta'_{u_m}(\epsilon)}=1/m!$,
which establishes~\eqref{eq:observ}. 

We now consider a more general scenario. 
Let $G$ be an infinite graph that is vertex transitive and has bounded degree. 
We assume that each edge of $G$ has length $1$, which gives a metric over $G$.
Let $\Pi_0(G)$ be the point process given by putting one node at each vertex of $G$ and define 
$\Pi_t(G)$ as the point process obtained by letting the nodes of $\Pi_0(G)$ move for time $t$ along the edges of $G$ according to independent Brownian motions. 
Then $R(\Pi_t(G))$ is the union of balls centered at the nodes of $\Pi_t$ and having radius $1/2$ with respect to the metric induced by $G$.
We note that the probability given in~\eqref{eq:observ} for any fixed path $u_1,u_2,\ldots, u_m$ of good nodes to form a path at a time 
$\epsilon$ that is sufficiently small is at most $1/m!$. This motivates our next conjecture.
\begin{conjecture}
   Let $G$ be an infinite graph that is vertex transitive and has bounded degree. 
   Then, for any $t>0$, the region $R(\Pi_t(G))$ contains only finite components almost surely.
\end{conjecture}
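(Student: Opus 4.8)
\section*{A proof proposal}

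The plan is to mimic the path‑based arguments used for the short‑ and long‑time regimes: fix a vertex $o$ of $G$ and the node $u_0\in\Pi_0(G)$ sitting at $o$, and show that, almost surely, the component of $R(\Pi_t(G))$ containing $B(u_0,1/2)$ is finite; since $G$ is countable, a union bound over vertices then gives that all components are finite. A preliminary step I would dispatch first is that $\Pi_t(G)$ is a.s.\ locally finite in the metric of $G$: a vertex at graph‑distance $k$ from $o$ lands inside $B_G(o,R)$ at time $t$ only if its Brownian motion travels distance at least $k-R$, which by the Gaussian tail bound (Lemma~\ref{lem:gaussiantail}) has probability at most $\exp\!\left(-c(k-R)^2/t\right)$; since bounded degree $D$ gives at most $D^k$ vertices at distance $k$, Borel--Cantelli yields finitely many nodes in each bounded region. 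Hence the intersection graph of $\Pi_t(G)$ (nodes joined when within distance $1$) is a.s.\ locally finite, and by K\"onig's lemma an infinite component forces an infinite self‑avoiding chain $u_0,u_1,u_2,\dots$ of distinct nodes with consecutive nodes within distance $1$ at time $t$.

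Write $v_j\in V(G)$ for the vertex $u_j$ originates from, and let $\mathcal C_m(v_0,\dots,v_m)$ be the event that the time‑$t$ positions $Y_{v_0},\dots,Y_{v_m}$ satisfy $d(Y_{v_j},Y_{v_{j+1}})\le 1$ for all $j$. The natural first move is a first‑moment bound, $\pr{\exists\text{ infinite chain}}\le\inf_m\sum_{v_1,\dots,v_m}\pr{\mathcal C_m(v_0,\dots,v_m)}$. Revealing $Y_{v_0},\dots,Y_{v_{m-1}}$ and using that the intensity of $\Pi_t(G)$ with respect to the length measure $\ell$ on $G$ is the constant $\lambda=2/D$ (Brownian motion on the metric graph $G$ is reversible with respect to $\ell$, and the initial one‑point‑per‑vertex configuration equals $\tfrac2D\ell$ on the $D$‑regular $G$), one gets $\sum_{v_m}\pr{d(Y_{v_{m-1}},Y_{v_m})\le 1\mid Y_{v_{m-1}}}=\lambda\,\ell\!\left(B(Y_{v_{m-1}},1)\right)=\lambda D=2$, and iterating gives $\sum_{v_1,\dots,v_m}\pr{\mathcal C_m}\le 2^m$. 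This does not decay, so the naive first moment fails: the intersection graph of $R(\Pi_t(G))$ has bounded \emph{mean} degree without this alone forcing subcriticality.

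To recover decay one must use that the motion along a path is essentially one‑dimensional, which is exactly what \eqref{eq:observ} exploits when $t$ is small. For small $t$ I would separate \emph{good} nodes (displacement $<1/2$) from the rest, as in Lemma~\ref{lem:problematic}: along any fixed chain the good nodes are forced into a near‑monotone ordering of their Brownian displacements projected onto the geodesic joining consecutive $v_j$'s, yielding a superexponentially small factor (a $1/m!$‑type gain, after discounting the at most $D$ directions available at each branch of $G$) that beats the $D^{O(m)}$ chains; the vanishing‑density set of non‑good nodes is handled by replacing them at time $0$ with a sparse Poisson process and running the thinning/sprinkling argument of Section~\ref{sec:stochdom}. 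This should settle the conjecture for all sufficiently small $t$ on every bounded‑degree vertex‑transitive $G$. A separate, clean argument handles $G=\mathbb Z$ (and similar quasi‑one‑dimensional graphs) for \emph{all} $t$: on a line, $k$ unit balls whose centres spread over an interval of length $\Theta(k)$ fail to cover it with probability tending to $1$, so a.s.\ arbitrarily far gaps appear and every component is finite.

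The main obstacle is large $t$ on graphs of exponential growth, regular trees being the extreme case: there the intersection‑graph branching number is genuinely of order $D$ even for small $t$ (the single‑step overlap probability is $\approx 1/D$, and only this precise constant keeps it below the critical value $\tfrac1{D-1}$), no $1/m!$ gain is available, and the first‑moment estimate is off by an exponential factor. A proof there would seem to need either (i) a multiscale renormalization showing that box‑crossing probabilities decay, seeded by a sufficiently sharp single‑scale estimate that overlaps are rare; or (ii) a ``reverse Lemma~\ref{lem:notentirer2}'' argument showing that an infinite component would force an atypically dense ball of $\Pi_t(G)$, contradicting the constant cable‑intensity together with quantitative anti‑clumping bounds inherited from the lattice initial condition. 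I expect route (i) to be the more promising one, but manufacturing a single‑scale input strong enough to overcome the $D^{O(m)}$ chain count on trees is precisely where the real difficulty lies — and the reason the statement is, for now, only a conjecture.
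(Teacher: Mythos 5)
The statement you are addressing is an open conjecture in the paper: the authors offer only the heuristic computation~\eqref{eq:observ} on $\mathbb{Z}$ (the $1/m!$ bound for a fixed path of good nodes) as motivation, and give no proof. Your proposal is likewise not a proof, and you say so yourself in the final paragraph, so the honest verdict is that the gap is the theorem. Concretely: the case of large $t$ on a graph of exponential growth (e.g.\ a $D$-regular tree) is entirely unresolved in your write-up. Your first-moment calculation correctly shows the expected number of length-$m$ chains does not decay, and you have no replacement estimate; both of your suggested routes (multiscale renormalization seeded by a single-scale crossing estimate, or an anti-clumping argument in the spirit of a reversed Lemma~\ref{lem:notentirer2}) are named but not executed. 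That is where the conjecture lives, and nothing in the paper or in your proposal closes it.

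Even the parts you present as essentially done deserve more caution. The small-$t$ argument leans on a ``$1/m!$-type gain after discounting at most $D$ directions,'' but \eqref{eq:observ} is proved in the paper only for $\Pi_0=\mathbb{Z}$, where a chain of good nodes forces a single monotone ordering of one-dimensional displacements. On a branching metric graph a chain $v_0,\dots,v_m$ turns corners at vertices; the displacement of $u_j$ must be decomposed along the two (generally different) incident edges toward $v_{j-1}$ and $v_{j+1}$, the resulting events are not a monotone ordering of i.i.d.\ real variables, and the constraints for consecutive pairs share nodes, so the factorial bound does not follow by the paper's argument. You would need to prove that the per-step cost is $o(1/D)$ uniformly along the chain (to beat the $D^m$ chain count), and the single-step overlap probability on a tree is of order $1/D$ exactly --- so the required gain must come from genuinely multi-node correlations, which you have not quantified. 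The sprinkling of non-good nodes via Lemma~\ref{lem:problematic} also does not transfer verbatim, since that lemma's intensity computation uses the planar heat kernel and the hexagonal tessellation of $\mathbb{R}^2$. The preliminary local-finiteness step and the quasi-one-dimensional case $G=\mathbb{Z}$ are fine, but they are the easy part. In short: you have correctly mapped the terrain and located the obstruction, but you have not proved the statement, and neither has anyone else --- it remains a conjecture.
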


\subsection{Critical radius of point processes}\label{sec:radius}
Here we let $\Pi$ be a point process over $\mathbb{R}^2$ and consider the region $R(\Pi,r)$ as the union of balls of radius $r$ centered at the nodes of 
$\Pi$. In this section, we only consider point processes with unit intensity and let $r_\critical(\Pi)$ be the smallest $r$ for which $R(\Pi,r)$ contains an infinite 
component. It is intuitive to believe that point processes that are more organized have smaller critical radius; this is the core of our next conjecture.
For more information on zeros of Gaussian analytic functions, we refer to~\cite{HKPV}.
\begin{conjecture}
   Let $\Pi_\mathrm{L}$ be any transitive point process with intensity $1$ (as defined before Question~\ref{con:monotone}).
   Let $\Pi_\mathrm{GAF}$ be a point process given by the zeros of a Gaussian analytic function with intensity 1
   and $\Pi_\mathrm{P}$ be a Poisson point process with intensity $1$. Then, 
   $$
      r_\critical(\Pi_\mathrm{L}) < r_\critical(\Pi_\mathrm{GAF}) < r_\critical(\Pi_\mathrm{P}).
   $$
\end{conjecture}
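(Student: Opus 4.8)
All three processes have intensity $1$, so the conjecture asserts that suppressing the fluctuations of the point configuration --- a lattice has none beyond a fixed scale, the GAF zero set has fluctuations much smaller than Poisson (it is hyperuniform, and indeed number--rigid in the sense of Ghosh--Peres), and Poisson has the most --- makes the Boolean model $R(\cdot,r)$ connect up at a smaller radius. One should first note that a small cluster of points placed at each site of a widely spaced Bravais lattice is still a transitive point process in the sense used before Question~\ref{con:monotone}, yet has arbitrarily large critical radius; hence the statement can only hold when $\Pi_\mathrm L$ ranges over Bravais lattices (equivalently, when $r_\critical(\Pi_\mathrm L)$ is read as the infimum of $r_\critical$ over such lattices, conjecturally attained by the triangular one), and I take this as the intended reading. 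The common engine for both inequalities is the renormalization scheme already used in this paper: fix $r$ and a large scale $K$, tile $\mathbb R^2$ by $O(K)$--hexagons, call a hexagon \emph{good} if $R(\Pi,r)$ realizes the block crossings used in the proof of Theorem~\ref{thm:smalltime}, note that goodness depends only on the points of $\Pi$ in an $O(K)$--neighbourhood, and conclude via \cite{LSS97} and \cite{PenPis,BBW05} that $r_\critical(\Pi)\le r$ once some single--block crossing probability exceeds a universal threshold, while $r_\critical(\Pi)\ge r$ once the complementary (dual) crossing probability does. For $\Pi_\mathrm L$ this is immediate; for $\Pi_\mathrm{GAF}$ one first replaces the GAF by a polynomial of large degree, which agrees with it on any fixed block up to a super--polynomially small error (see \cite{HKPV}), so that goodness becomes finite--range up to a negligible tail absorbed by an arbitrarily small increase of $r$.

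\textbf{The inequality $r_\critical(\Pi_\mathrm L)<r_\critical(\Pi_\mathrm{GAF})$.} For the triangular lattice of intensity $1$ the critical radius is exactly half the nearest--neighbour distance, $a/2$ with $a=(2/\sqrt3)^{1/2}$, and for every $r>a/2$ the deterministic configuration already contains an explicit doubly infinite crossing, so the renormalization above gives percolation of $R(\Pi_\mathrm L,r)$ for all such $r$. On the GAF side I would use the strong short--range repulsion of the zero set: its two--point correlation $\rho_2(z)$ vanishes quadratically as $z\to0$, so the expected number of zeros within distance $2r$ of a given zero is $O(r^4)$ with an explicit constant, and a renormalized first-- (or second--) moment argument, in the spirit of subcriticality proofs for the Boolean model on rapidly mixing point processes, shows that $R(\Pi_\mathrm{GAF},r)$ has only finite components once this expected degree is below a universal constant, i.e. for all $r$ below some explicit $r_1$. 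It then remains to check $r_1>a/2$, which reduces to one numerical inequality on the pair correlation of the planar GAF; granting it, any $r\in(a/2,r_1)$ witnesses the strict inequality. The delicate point is making the GAF subcriticality rigorous in the absence of positive association, which I expect to follow from the fast decay of correlations of the GAF zero process.

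\textbf{The inequality $r_\critical(\Pi_\mathrm{GAF})<r_\critical(\Pi_\mathrm P)$.} Fix $r$ slightly below $r_\critical(\Pi_\mathrm P)$, so that for Poisson of intensity $1$ the $K$--block crossing probability tends to $0$ as $K\to\infty$; the goal is to show that for the GAF zero set it instead tends to $1$, whence the renormalization yields percolation of $R(\Pi_\mathrm{GAF},r)$ and $r_\critical(\Pi_\mathrm{GAF})\le r<r_\critical(\Pi_\mathrm P)$. This is where hyperuniformity should enter quantitatively: the probability that a disc of radius $\rho$ contains no GAF zero is $\exp(-\Theta(\rho^4))$, and more generally no macroscopic sub--region of a $K$--block is atypically empty except with super--polynomially small probability --- precisely the local configurations that are among those responsible for the failure of crossings of the near--critical Poisson Boolean model. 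One therefore expects the GAF block crossing probability to be strictly larger, and the task is to upgrade this to a limit equal to $1$. An essentially equivalent, and perhaps more tractable, target is to show that $\Pi_\mathrm{GAF}$ of intensity $1$ is at least as percolative as a Poisson process of intensity $1+\varepsilon$ for some $\varepsilon>0$; the scaling identity \eqref{eq:rscaling} from the proof of Theorem~\ref{thm:radius} then closes the argument.

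\textbf{Main obstacle.} The hardest step, and the one not supplied by anything in this paper, is a genuine comparison principle: the three processes are not related by stochastic domination or by any positive/negative association, so controlling the \emph{sign} of the difference of their block crossing probabilities at a common radius requires a new ``rearrangement'' inequality of the type ``spreading a configuration out does not decrease crossing probabilities''. In addition, both $r_\critical(\Pi_\mathrm P)$ and the GAF threshold $r_1$ above are presently accessible only numerically (the former through $\lambda_\critical$, which is known rigorously only to lie in $[0.52,3.38]$, see \cite{BR}, an interval too wide even to separate the lattice value $a/2$ from $r_\critical(\Pi_\mathrm P)$), so --- exactly as with Theorem~\ref{thm:smalltime} --- a complete proof will most likely have to be Monte--Carlo assisted, certifying the relevant finite--dimensional crossing integrals with high confidence rather than in closed form.
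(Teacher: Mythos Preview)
The statement you are attempting is a \emph{conjecture} in the paper, listed among the open problems in Section~\ref{sec:radius}; the paper offers no proof, not even a sketch. There is therefore no paper argument to compare your proposal against.

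Your write-up is not a proof either, and you say as much: it is a strategy with explicitly flagged gaps. Two remarks are worth making. First, your opening observation is correct and important: under the paper's definition of ``transitive'' (an isometry of the configuration carrying any node to any other), one can build intensity-$1$ transitive configurations with arbitrarily large $r_\critical$ --- e.g.\ a tiny regular $n$-gon of points at each site of a compatible Bravais lattice of spacing $\sqrt{n}$ --- so the first inequality is false as literally stated and must be read with $\Pi_\mathrm{L}$ restricted to genuine lattices. Second, the substantive steps you identify as missing --- a comparison principle relating block-crossing probabilities across the three processes, and any rigorous separation of the numerical thresholds --- are exactly the obstructions; nothing in the paper (renormalization, \cite{LSS97}, \cite{PenPis}, or \cite{BBW05}) supplies them, and your appeal to hyperuniformity and pair-correlation decay, while plausible heuristics, does not by itself control the \emph{sign} of the difference at a common radius. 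In short: there is no proof here to assess against the paper, your outline is a reasonable research plan, and you have correctly located both a defect in the statement and the genuine difficulty in proving the intended version.
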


Finally, consider a Poisson point process $\Pi$ with intensity $1$ over $\mathrm{R}^d$ and let $r_\critical$ be the critical radius for percolation of 
balls centered at the nodes of $\Pi$. Our last open problem concerns small perturbations of the critical radius.
\begin{question}
   Let $\epsilon>0$ and, for each node $v\in \Pi$, let $X_v$ be a uniform random
   variable over $[-\epsilon,\epsilon]$. 
   For each node $v\in\Pi$, add a ball of radius $r_\critical+X_v$ centered at $v$. Will the union of the balls
   contain an infinite component almost surely?
\end{question}

\bibliographystyle{plain}
\bibliography{hexagonal}

\appendix

\section{Standard large deviation results}

We use the following standard Chernoff bounds during our proofs.

\begin{lemma}[Chernoff bound for Poisson]\label{lem:cbpoisson}
   Let $P$ be a Poisson random variable with mean $\lambda$. Then, for any
   $0<\epsilon<1$,
   $$
      \pr{P \geq (1+\epsilon) \lambda} \leq  \exp\left(-\frac{\lambda \epsilon^2}{2}(1-\epsilon/3)\right)
   \quad\text{ and }\quad
      \pr{P \leq (1-\epsilon) \lambda} \leq  \exp\left(-\frac{\lambda \epsilon^2}{2}\right).
   $$
\end{lemma}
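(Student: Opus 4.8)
The plan is to use the standard Cram\'er--Chernoff exponential moment method. Recall that the moment generating function of a Poisson random variable $P$ with mean $\lambda$ is $\E{e^{sP}}=\exp\left(\lambda(e^s-1)\right)$ for every $s\in\mathbb{R}$, a fact I would derive (or simply quote) at the outset.

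For the upper tail, I would fix $s>0$, apply Markov's inequality to the nonnegative random variable $e^{sP}$, and obtain
$$
   \pr{P\geq (1+\epsilon)\lambda} \leq \frac{\E{e^{sP}}}{e^{s(1+\epsilon)\lambda}} = \exp\left(\lambda(e^s-1) - s(1+\epsilon)\lambda\right).
$$
Differentiating the exponent in $s$ shows it is minimized at $s=\ln(1+\epsilon)>0$, which yields the bound $\exp\left(\lambda\left(\epsilon-(1+\epsilon)\ln(1+\epsilon)\right)\right)$. It then remains to verify the elementary inequality $(1+\epsilon)\ln(1+\epsilon)\geq \epsilon+\frac{\epsilon^2}{2}-\frac{\epsilon^3}{6}$ for all $\epsilon\geq 0$; this follows because the difference of the two sides vanishes together with its first derivative at $\epsilon=0$ and has second derivative $\epsilon^2/(1+\epsilon)\geq 0$. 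Plugging this in gives $\epsilon-(1+\epsilon)\ln(1+\epsilon)\leq -\frac{\epsilon^2}{2}(1-\epsilon/3)$, which is exactly the claimed estimate.

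For the lower tail, I would instead fix $s>0$ and apply Markov's inequality to $e^{-sP}$, giving
$$
   \pr{P\leq (1-\epsilon)\lambda} \leq \frac{\E{e^{-sP}}}{e^{-s(1-\epsilon)\lambda}} = \exp\left(\lambda(e^{-s}-1) + s(1-\epsilon)\lambda\right),
$$
whose exponent is minimized at $s=-\ln(1-\epsilon)>0$, producing $\exp\left(\lambda\left(-\epsilon-(1-\epsilon)\ln(1-\epsilon)\right)\right)$. The auxiliary inequality needed here is $(1-\epsilon)\ln(1-\epsilon)\geq -\epsilon+\frac{\epsilon^2}{2}$ for $\epsilon\in[0,1)$, again checked by a second-derivative argument (the second derivative of the difference equals $\epsilon/(1-\epsilon)\geq 0$). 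This gives $-\epsilon-(1-\epsilon)\ln(1-\epsilon)\leq -\frac{\epsilon^2}{2}$, completing the proof.

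The only real point of care --- and it is bookkeeping rather than a genuine obstacle --- is the verification of the two elementary logarithmic inequalities; the rest is the textbook Chernoff argument. Since both bounds are entirely standard and appear in the usual references on concentration of measure, one could alternatively just cite such a reference and omit the calculation.
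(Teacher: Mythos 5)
Your proof is correct: the exponential-moment (Cram\'er--Chernoff) argument with the optimal choices $s=\ln(1+\epsilon)$ and $s=-\ln(1-\epsilon)$, together with the two logarithmic inequalities you verify by the second-derivative test, yields exactly the stated bounds. The paper itself gives no proof of this lemma --- it is stated in the appendix as a standard large-deviation result --- and your argument is precisely the textbook one that such a citation would point to.
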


\begin{lemma}[Chernoff bound for Binomial, see~{\cite[Lemma~A.1.4]{AlonSpencer}}]\label{lem:cbbinomial}
   Let $X_1,X_2,\ldots,X_n$ be independent Bernoulli random variable such that $\E{X_i}=p_i$. Let $X=\sum_{i=1}^n X_i$. Then, for any 
   $\epsilon>0$,
   $$
      \pr{X \geq (1+\epsilon) \E{X}} \leq  \exp\left(-\frac{2\epsilon^2(\E{X})^2}{n}\right)
   \quad\text{ and }\quad
      \pr{P \leq (1-\epsilon) \lambda} \leq  \exp\left(-\frac{\lambda \epsilon^2}{2}\right).
   $$
\end{lemma}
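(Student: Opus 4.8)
The plan is to establish the first inequality through the exponential-moment (Chernoff) method, obtaining it as a consequence of Hoeffding's inequality for sums of bounded independent random variables, and then to derive the lower-tail bound by applying the same argument to the complementary variables.

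First I would fix a parameter $\lambda>0$ and apply Markov's inequality to $e^{\lambda X}$, using independence of the $X_i$, to get
$$
   \pr{X\geq \E{X}+a}
   \leq e^{-\lambda(\E{X}+a)}\,\E{e^{\lambda X}}
   = e^{-\lambda(\E{X}+a)}\prod_{i=1}^n \E{e^{\lambda X_i}}.
$$
The key ingredient is a uniform control on the moment generating function of each summand: for any random variable $Y$ taking values in $[0,1]$ with $\E{Y}=p$, one has $\E{e^{\lambda Y}}\leq \exp\big(\lambda p+\lambda^2/8\big)$. I would prove this (Hoeffding's lemma) by setting $\psi(\lambda)=\log\E{e^{\lambda Y}}$, noting $\psi(0)=0$ and $\psi'(0)=p$, and observing that $\psi''(\lambda)$ equals the variance of $Y$ under the tilted law proportional to $e^{\lambda Y}$, which is supported on $[0,1]$ and hence has variance at most $1/4$; integrating $\psi''\leq 1/4$ twice yields the claim. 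Alternatively one simply invokes the textbook statement of Hoeffding's lemma.

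Plugging the moment generating function bound into the previous display gives $\pr{X\geq \E{X}+a}\leq \exp\big(-\lambda a+n\lambda^2/8\big)$, valid for every $\lambda>0$. Optimizing the right-hand side by choosing $\lambda=4a/n$ yields $\pr{X\geq \E{X}+a}\leq \exp(-2a^2/n)$, and substituting $a=\epsilon\,\E{X}$ produces exactly $\pr{X\geq (1+\epsilon)\E{X}}\leq \exp\big(-2\epsilon^2(\E{X})^2/n\big)$. For the lower tail, I would run the identical computation with $e^{-\lambda X}$ in place of $e^{\lambda X}$ — equivalently, apply the upper-tail bound to the independent variables $1-X_i\in[0,1]$, whose sum has mean $n-\E{X}$ — which gives the symmetric estimate; the slightly sharper multiplicative form $\exp(-\epsilon^2\E{X}/2)$ is obtained instead from the elementary bound $\E{e^{-\lambda X_i}}\leq \exp\big(-p_i(1-e^{-\lambda})\big)$ followed by the choice $\lambda=-\log(1-\epsilon)$.

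I do not anticipate a genuine obstacle, as this is a standard estimate (see Alon and Spencer, Appendix~A). The only step that needs a little care is the uniform moment generating function bound for $[0,1]$-valued variables, since the summands are not assumed identically distributed and the constant $1/8$ must not be lost; this is handled cleanly by the convexity argument above, after which the optimization over $\lambda$ is a one-line calculation.
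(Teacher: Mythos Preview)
Your argument is correct: the upper tail is exactly Hoeffding's inequality, derived via Hoeffding's lemma and optimization over the tilt parameter, and your treatment of the lower tail via the bound $\E{e^{-\lambda X_i}}\leq \exp(-p_i(1-e^{-\lambda}))$ with $\lambda=-\log(1-\epsilon)$ does yield the stated $\exp(-\epsilon^2\E{X}/2)$ (the $P$ and $\lambda$ in the displayed lower-tail inequality are evidently a copy-paste slip from the Poisson lemma above it and should be read as $X$ and $\E{X}$).

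There is nothing to compare against: the paper does not prove this lemma but simply cites it from Alon and Spencer's appendix. Your write-up supplies the standard proof that the citation points to.
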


\begin{lemma}[Gaussian tail bound~{\cite[Theorem~12.9]{BM}}]\label{lem:gaussiantail}
   Let $X$ be a normal random variable with mean $0$ and variance $\sigma^2$. Then, for any $R \geq \sigma$ we have that
   $\pr{X \geq R} \leq \frac{\sigma}{\sqrt{2\pi}R}\exp\left(-\frac{R^2}{2\sigma^2}\right)$.
\end{lemma}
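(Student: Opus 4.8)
The plan is to reduce to the standard normal distribution by scaling and then to bound the tail integral by an explicitly integrable one. First I would write $X=\sigma Z$, where $Z$ is a standard normal random variable, so that $\pr{X\geq R}=\pr{Z\geq R/\sigma}$. Setting $r=R/\sigma$, the hypothesis $R\geq\sigma$ gives $r\geq 1$, and it suffices to prove that $\pr{Z\geq r}\leq \frac{1}{\sqrt{2\pi}\,r}\exp(-r^2/2)$ for all $r\geq 1$ (in fact the same inequality holds for every $r>0$).

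Next, starting from the identity $\pr{Z\geq r}=\int_r^\infty \frac{1}{\sqrt{2\pi}}e^{-x^2/2}\,dx$, I would use that on the range of integration one has $x\geq r$, so that $1\leq x/r$; multiplying the integrand by this factor only increases the integral, which yields
\[
   \pr{Z\geq r}\leq \frac{1}{r}\int_r^\infty \frac{x}{\sqrt{2\pi}}e^{-x^2/2}\,dx
   =\frac{1}{r\sqrt{2\pi}}\Big[-e^{-x^2/2}\Big]_r^\infty
   =\frac{1}{r\sqrt{2\pi}}\,e^{-r^2/2},
\]
where I used that $-e^{-x^2/2}$ is an antiderivative of $xe^{-x^2/2}$. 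Substituting $r=R/\sigma$ back in gives $\pr{X\geq R}\leq \frac{\sigma}{\sqrt{2\pi}\,R}\exp\!\big(-R^2/(2\sigma^2)\big)$, which is exactly the claimed bound.

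There is essentially no obstacle in this argument; the only minor point requiring care is that the inequality $x/r\geq 1$ is valid precisely because the integration variable satisfies $x\geq r$, and one should observe that the assumption $R\geq\sigma$ is not actually needed for this particular inequality and is retained only to match the formulation in~\cite{BM}. An equivalent route would be to quote the classical Mills-ratio estimate for the Gaussian tail directly, but the one-line integral comparison above is self-contained and is what I would include.
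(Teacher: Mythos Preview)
Your argument is correct and is the classical one-line proof of the Mills-ratio upper bound. Note that the paper does not actually give a proof of this lemma at all; it simply states the result with a citation to~\cite[Theorem~12.9]{BM}, so there is nothing to compare against beyond observing that your self-contained derivation is exactly the standard proof one would find behind that citation.
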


\end{document}